

\documentclass[twoside,leqno]{article}
\usepackage[letterpaper]{geometry}
\usepackage{tikz,tikz-cd}
\usetikzlibrary{arrows.meta}
\usetikzlibrary{decorations.markings,calc}
\usepackage{bm}

\usepackage{siamproceedings}

\usepackage[T1]{fontenc}
\usepackage{amsfonts,amssymb}
\usepackage{graphicx}
\usepackage{epstopdf}
\usepackage{enumitem}
\usepackage{algorithmic}
\ifpdf
  \DeclareGraphicsExtensions{.eps,.pdf,.png,.jpg}
\else
  \DeclareGraphicsExtensions{.eps}
\fi

\newcommand{\longdashrightarrow}[1][2.5pt]{%
  \settowidth{\@tempdima}{$\longrightarrow$}\longrightarrow
  \makebox[-\@tempdima]{\hskip-0.5ex\color{white}\rule[0.5ex]{#1}{1pt}}
    \phantom{\longrightarrow}
  \makebox[-\@tempdima]{\hskip-2.8ex\color{white}\rule[0.5ex]{#1}{1pt}}
  \phantom{\longrightarrow}
}

\newsiamremark{remark}{Remark}
\newsiamremark{example}{Example}
\newsiamremark{hypothesis}{Hypothesis}
\crefname{hypothesis}{Hypothesis}{Hypotheses}
\newsiamthm{claim}{Claim}
\newsiamthm{conjecture}{Conjecture}

\def\A{{\mathcal{A}}}
\def\I{{\mathcal{I}}}
\def\O{{\mathcal{O}}}
\def\L{{\mathcal{L}}}
\def\a{{\mathbf{a}}}
\def\x{{\mathbf{x}}}

\def\R{{\mathbb{R}}}
\def\C{{\mathbb{C}}}

\def\P{{\mathbb{P}}}
\def\Id{{\rm Id}}

\def\tla{{\tilde \lambda}}
\def\p{{\mathbf{p}}}
\def\q{{\mathbf{q}}}
\def\y{{\mathbf{y}}}

\def\M{{\mathcal{M}}}
\def\bM{\overline{\M}}
\def\T{{\mathcal{T}}}

\def\WW{{\mathcal{W}}}
\def\MS{{\C^4}}
\def\Int{{\rm Int}}
\def\Fl{{\rm Fl}}

\def\tF{{\tilde F}}

\def\OS{{A}}

\def\atom{c}
\def\Ccal{{\mathcal{C}}}
\def\bl{{\bm{\ell}}}
\def\mini{\fontsize{7pt}{8pt}\selectfont}

\def\tree{{\rm tree}}
\usepackage{amsopn}

\DeclareMathOperator{\Tr}{Tr}
\DeclareMathOperator{\Gr}{Gr}
\DeclareMathOperator{\SO}{SO}
\DeclareMathOperator{\SU}{SU}
\DeclareMathOperator{\SL}{SL}
\DeclareMathOperator{\PGL}{PGL}
\DeclareMathOperator{\Conf}{Conf}
\DeclareMathOperator{\Res}{Res}
\DeclareMathOperator{\pt}{pt}
\DeclareMathOperator{\Ass}{Ass}
\DeclareMathOperator{\Crit}{Crit}
\DeclareMathOperator{\reg}{reg}
\DeclareMathOperator{\Trop}{Trop}

\DeclareMathOperator{\Vol}{Vol}
\DeclareMathOperator{\conv}{conv}
\DeclareMathOperator{\rank}{rank}

\newcommand\ip[1]{{\langle #1 \rangle}}

\newcommand\gdRip[1]{\langle #1 \rangle^{\nabla}}

\begin{document}

\title{\Large The combinatorial geometry of particle physics}
    \author{Thomas Lam \thanks{University of Michigan, Ann Arbor, USA (\email{tfylam@umich.edu}).}
 }

\date{}
\maketitle







\begin{abstract} 
Recent breakthroughs in the study of scattering amplitudes have uncovered profound and unexpected connections with combinatorial geometry. These connections range from classical structures -- such as polytopes, matroids, and Grassmannians -- to more modern developments including positroid varieties and the amplituhedron. Together they point toward the unifying framework of positive geometry, in which geometric domains canonically determine analytic functions governing scattering processes. This survey traces the emergence of positive geometry from the physics of amplitudes, building towards recent progress on amplitudes for matroids.
\end{abstract}

\section{Introduction.}\label{sec:intro}
Quantum field theory (QFT) is the mathematical formalism of particle physics, used to model the results of particle accelerator experiments, such as those at the Large Hadron Collider.  Despite its central role, many aspects of QFT are still mysterious, drawing sustained interest from mathematicians.  Over the past two decades, the study of scattering amplitudes, functions that encode probabilities of outcomes in elementary particle scattering, has undergone a remarkable transformation.  Calculations once dominated by elaborate Feynman diagram expansions have given way to new approaches that expose hidden simplicity and symmetry. These insights have brought amplitudes into direct dialogue with combinatorial geometry, revealing deep connections to polytopes, Grassmannians, and matroids, and beyond.

To set the stage, we start in \cref{sec:amplitudes,sec:gauge} with a quick overview of the physics.  In \cref{sec:amplitudes}, we give an informal introduction to the perturbative scattering amplitudes via Feynman diagrams.   In \cref{sec:gauge}, we recall how planarity appears in gauge theory amplitudes, and discuss the landmark Parke-Taylor formula for gluon scattering.  

Combinatorial geometry arises naturally: imposing the condition that particles are massless leads to the intersection theory for lines in projective space (\cref{sec:Schubert}).  The classical Schubert problem involving four given lines in three-space emerges from the Feynman box diagram.  More involved Feynman diagrams leads to the notion of on-shell diagram (\cref{sec:positroid}).  These diagrams directly give rise to the positroid stratification of the Grassmannian \cite{KLS,PosTP}.  By the seminal works \cite{BCFW,Grassbook}, tree-level scattering amplitudes for (super) Yang-Mills (SYM) theory can be completely described in terms of the combinatorics and geometry of positroid varieties.

In \cref{sec:ampli}, we introduce the amplituhedron \cite{AT}, a geometric incarnation of the SYM scattering amplitude as a ``polytope’’ in the Grassmannian.  In \cref{sec:positive}, we introduce the notion of positive geometries \cite{ABL}.  This mathematical construct canonically produces functions (such as the amplitude) from geometries (such as the amplituhedron).

In \cref{sec:polytope}, we turn to the most familiar example of positive geometries: polytopes.  The notion of canonical form for positive geometries suggests the definition of the dual mixed volume \cite{GLX}; the planar $\phi^3$-amplitude arises as the dual mixed volume of an associahedron.  While the planar $\phi^3$-amplitude is a toy model in quantum field theory, it is a ``low-energy’’ limit of the open string theory amplitude.  The latter function is intricately tied to the combinatorial geometry of the moduli spaces $\M_{g,n}$ of genus $g$ curves with $n$ marked points.  In \cref{sec:moduli}, we discuss aspects of the positive geometry of the moduli space $\M_{0,n}$, a concise excerpt of our lecture notes \cite{LamModuli}.

In \cref{sec:hyper,sec:matroid,sec:tropical}, we move from $\M_{0,n}$ to matroids.  We explain how the scattering equations formalism \cite{CHYarbitrary} for amplitudes can be generalized to the setting of hyperplane arrangement complements \cite{LamMat}.  Well studied notions from hyperplane arrangements such as the Orlik-Solomon algebra, Aomoto cohomology, and the Schechtman-Varchenko bilinear form make an appearance.  Our joint work with Eur \cite{EL} shows that topes of oriented matroids can be viewed as positive geometries, which leads to a notion of amplitudes for matroids.  We explain the basic analogy between matroids and QFT discovered in \cite{LamMat}.  In \cref{sec:tropical}, we give a formula for matroid amplitudes using the geometry of the positive Bergman fan, ending our story with tropical geometry.

For other surveys of this rapidly developing field, see \cite{LamPosGeom,LamModuli,FeSa,RaStTe}.

\section{Scattering amplitudes.}\label{sec:amplitudes}
Scattering amplitudes are functions of the momenta of the particles involved.  We write
$\A_n(\p_i)= \A_n(\p_1,\ldots,\p_n)$ for a scattering amplitude of $n$-particles with momentum vectors $\p_1,\ldots,\p_n \in \R^D$, where $D$ is the dimension of space-time.  For the physics of the real world, we would take $D = 4$, so that $\p_i$ are four-dimensional momentum vectors.  Hidden in this notation is the ``type'' (electron, photon, and so on) of the $n$ particles being scattered.

The probability of witnessing a particular scattering event is proportional to the integral
$$
\int |\A_n(\p_i)|^2 d\mu 
$$
for an appropriate measure $d\mu$.  From a theoretical perspective, the amplitude $\A_n(\p_i) $ can be expressed as a \emph{path integral} in quantum field theory.  In practice, one typically considers a perturbative expansion
\begin{equation}\label{eq:pert}
\A_n(\p_i) = \sum_{L=0}^\infty g^L \A^{(L)}_n(\p_i),
\end{equation}
where the sum is over the \emph{loop order} $L$, and $g$ is the coupling constant, measuring the strength of the interaction of particles.  When $g$ is small, one hopes that the perturbative expansion \eqref{eq:pert} holds.

The term $\A^{(L)}_n(\p_i)$ is the \emph{$L$-loop amplitude} and is given as the sum of Feynman integrals
\begin{equation}
\label{eq:FD}
\A^{(L)}_n(\p_i) =\sum_\Gamma \int_{\R^{DL}} \frac{N(\p_i,\bl_j)}{\prod_e (\q_e^2 - m_e^2) } \prod_{j=1}^L d^D \bl_j,
\end{equation}
where the summation is over certain labeled graphs $\Gamma$ called $L$-loop Feynman diagrams, depending on the choice of QFT, and the denominators involve linear combinations $\q_e$ of the external momenta $\p_i$ and the loop momenta $\bl_j$.  Here, $\q_e^2 := \q_e \cdot \q_e$, using the Lorentz inner product of $D$-dimensional space-time.  The numerator $N(\p_i,\bl_j)$ depends on the choice of QFT, the Feynman diagram $\Gamma$, and so on, and is a polynomial in the vectors $\p_i$ and $\bl_j$.  The product in the denominator is over the set of (internal) edges $e$ of the Feynman diagram $\Gamma$. 

The particles represented by the internal edges of $\Gamma$ are \emph{virtual particles}.  Real particles have momentum vectors satisfying the \emph{on-shell} condition: $\p^2 = m^2$.  Virtual particles can be \emph{off-shell}, and may not satisfy this equality.

When $L=0$, the function $\A^{(0)}_n(\p_i)$ is called the tree amplitude, and it is a rational function.  When $L > 0$, the analytic dependence of $\A^{(L)}_n(\p_i)$ on the $\p_i$ can be much more complicated.  Furthermore, the integrals in \eqref{eq:FD} do not in general converge.  Ultraviolet divergences occur due to the integration of large values of loop momentum space; this is dealt with via \emph{renormalization}.  Infrared divergences occur due to the presence of massless particles, leading to the vanishing of the denominator factors in \eqref{eq:FD}; this is typically dealt with via \emph{dimensional regularization}, expressing the integrals in \eqref{eq:FD} as functions of the space-time dimension $D$.  

A mathematically rigorous development of quantum field theory has long held the interest of mathematicians.  In particular, there have been substantial developments in the relation between Feynman integrals and the theory of periods and motives; see for example \cite{Mar}.  

\begin{example}\label{ex:phi3}
We discuss a concrete example that will make a number of appearances in this survey.  We consider massless scalar particles with cubic interaction, called $\phi^3$-theory.  In this theory there is one kind of particle, the Feynman diagrams are cubic graphs with leaves labeled $1,2,\ldots,n$, and the numerator factors $N(\p_i,\bl_j)$ are equal to 1.  For the tree-level ($L = 0$) amplitude the Feynman diagrams are trees and there are no integrals, and we have (up to a constant that depends on coupling constants, etc.)
\begin{equation}\label{eq:fullphi3}
\A_n^{(0)}(\p_i) = \sum_{\mbox{ {\mini cubic trees $T$ with $n$ leaves}}} \prod_{e \in I(T)} \frac{1}{X_e},
\end{equation}
where the product is over the internal edges $I(T)$ of $T$, and $X_e$ denotes the square of the momentum flowing along the edge $e$.  If the leaves labeled $i_1,i_2,\ldots,i_r$ are on one side of the edge $e$, then 
\begin{equation}\label{eq:Xe}
X_e = (\p_{i_1} + \p_{i_2} + \cdots + \p_{i_r})^2
\end{equation}
and by momentum conservation $\sum_{i=1}^n \p_i = 0$, this does not depend on which side of the edge $e$ we considered; see \cref{fig:Xe}.  Note that this is a rational function in the \emph{Mandelstam variables}, the Lorentz inner products of the momentum vectors, 
$$
s_{ij} := (\p_i + \p_j)^2 = 2 \p_i \cdot \p_j
$$
where we have used the massless condition $\p_i^2 = 0 = \p_j^2$.  Thus \eqref{eq:fullphi3} is a rational function on (massless) kinematic space $K_n$, the vector space with distinguished linear functions $s_{ij}$, $i \neq j$ satisfying
\begin{equation}
\label{eq:sij}
s_{ij} = s_{ji}, \qquad \sum_j s_{ij} = 0 \mbox{ for $i = 1,2,\ldots,n$},
\end{equation}
where the latter condition comes again from momentum conservation.  For $n = 4$ we would get
$$
\A_4^{(0)}(\p_i) = \frac{1}{(\p_1+\p_2)^2} + \frac{1}{(\p_1+\p_3)^2} + \frac{1}{(\p_1+\p_4)^2} = \frac{1}{s_{12}} + \frac{1}{s_{13}} + \frac{1}{s_{14}}.
$$
At $1$-loop, $\A_n^{(1)}$ is given by a sum of integrals of the form
$$
 \int_{\R^D} \frac{1}{\q^2 (\p_1- \q)^2(\p_1+\p_2- \q)^2 (\p_4 + \q)^2} d^D \q,
$$
the integral being over the $D$-dimensional momentum space $\R^D$ for $\q$.  This integral comes from the $1$-loop box diagram in \cref{fig:fourbox}.
\end{example}

\begin{figure}\mini
\begin{center}
$$
\begin{tikzpicture}
\coordinate (A) at (-0.5,0);
\coordinate (B) at (1,0);
\coordinate (C) at (2,-0.866);
\coordinate (D) at (3.5,-0.866);
\node (L1) at (-1,-0.866) {$1$};
\node (L2) at (-1,+0.866) {$2$};
\node (L3) at (1.5,+0.866) {$3$};
\node (L4) at (4,0) {$4$};
\node (L5) at (4,-2*0.866) {$5$};
\node (L6) at (1.5,-2*0.866) {$6$};
\draw[thick] (A) --(B)--(C)--(D)--(L4);
\draw[thick] (L1)--(A)--(L2);
\draw[thick] (L3)--(B);
\draw[thick] (D)--(L5);
\draw[thick] (C)--(L6);
\draw[thin,->,red] (2.8,1.3) to [bend left = 25] (1.5,-0.43);
\node at (0.25,0.2) {\mini $\pm(\p_1+\p_2)$};
\node at (2.8,1.5) {\mini $\pm(\p_1+\p_2+\p_3)$};
\node at (2.75,-1.05) {\mini $\pm(\p_4+\p_5)$};
\end{tikzpicture}
$$
\end{center}
\caption{A cubic planar tree with six leaves.  Each internal edge is labeled with the momentum of the particle traveling along it, which can be calculated by momentum conservation.}
\label{fig:Xe}
\end{figure}
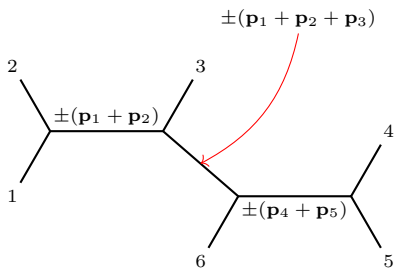

\section{Massless scattering amplitudes in gauge theory.}\label{sec:gauge}
Dixon \cite{Dix} calls scattering amplitudes ``the most perfect microscopic structures in the universe''.  Specifically, the scattering of massless particles in supersymmetric gauge theory, such as super Yang-Mills theory, exhibits surprising elegance and simplicity.  For physics textbook introductions, we refer the reader to \cite{EH,HePl}.  

A key feature of gauge theory is that one is led to the study of \emph{planar amplitudes}.  In gauge theories, particles have \emph{color}.  This is a vector in a chosen representation of the gauge group.  We take the gauge group to be $\SU(N)$, and consider the scattering of particles called gluons, including the ones appearing in quantum chromodynamics (QCD) describing the strong force.  The representation will be chosen to be the adjoint representation $\mathfrak{su}(N)$.  We fix a basis $T^1,T^2,\ldots,T^r$ for $\mathfrak{su}(N)$, and we say that a particle has color $a$, if it is labeled by the vector $T^a$.  Furthermore, we consider helicity amplitudes, so that every particle is assigned a helicity $+$ or $-$, roughly corresponding to the direction of angular momentum.  The tree-level amplitude then has the following decomposition:
\begin{equation}\label{eq:color}
\A_n^{\tree}({\p_i, h_i, a_i}) = C \sum_{\sigma \in S_n/Z_n} \Tr(T^{a_{\sigma(1)}} \cdots T^{a_{\sigma(n)}}) A_n^{\tree}(\sigma(1^h_n),\ldots,\sigma(n^{h_n})),
\end{equation}
where $a_i$ is the color of particle $i$, and $h_i$ is the helicity of particle $i$.  Here, the summation is over the set $S_n/Z_n$ of permutations of the particles, up to cyclic rotation.  The color-ordered amplitudes $A_n^{\tree}$ no longer depend on the color indices $a_i$ and is a summation over only the \emph{planar} Feynman tree diagrams.  Schematically, we have
$$
\text{full amplitude} = \sum \text{(group theory factor)} \cdot \text{(kinematic factor)} ,
$$
where the group theory factor depends on color indices and the gauge group, while the kinematic factor depends on the momentum vectors.  

At higher loops, the analogous formula to \eqref{eq:color} has more terms.  For example, at one-loop, there are two terms: one is a summation where the group theory factors are single traces; the other involves a product of two traces.  Sending $N \to \infty$ while keeping the 'tHooft coupling $\lambda= g^2 N$ fixed (here, $g$ is a coupling constant), the single trace term dominates, and we again have a summation over planar Feynman diagrams, the graphs having Betti number $L$.  In other words, in the large rank limit, the planar part of the amplitude dominates.  

While in the original setting \eqref{eq:FD}, the momenta are real $D$-dimensional vectors, we henceforth consider the amplitudes $A_n^{(L)}$ as complex analytic functions defined on complex momentum space.  In the 1980s, Parke and Taylor \cite{PT} discovered a remarkably simple formula for color-ordered massless gluon scattering amplitudes at tree-level:
\begin{equation}\label{eq:PT}
A_n^{\tree}(1^+,\ldots,i^-, \ldots, j^-,\ldots, n^+) = \frac {\langle i\;j\rangle ^{4}}{\langle 1\;2\rangle \langle 2\;3\rangle \cdots \langle (n-1)\;n\rangle \langle n\;1\rangle }.
\end{equation}
On the left hand side, the notation indicates that particles $i$ and $j$ have negative helicity, while the rest have positive helicity.  The right hand side is a rational function expressed in terms of \emph{spinors}.  In the $D=4$ spacetime dimensions of the real world, complexified momentum spacetime $\C^4$ can be identified with the space of $2 \times 2$ complex matrices $M$.  We choose the identification $\p \mapsto M(\p)$ so that $\p \cdot \p = \det(M(\p))$.  Since we assume the particles are massless, we obtain matrices $M(\p)$ of rank less than or equal to one.  Thus $M(\p)$ has a factorization as $M(\p)= \lambda \tla$, the product of a column 2-vector $\lambda$ with a row 2-vector $\tla$.  The vectors $\lambda$ and $\tla$ are called spinors, and the angle brackets $\langle i j \rangle$ is the $2 \times 2$ determinant of $[\lambda_i \lambda_j]$.  Conceptually, the complexified Lorentz group $\SO(4,\C)$ can be identified with $\SL(2,\C) \times \SL(2,\C)$ (modulo centers), which induces an isomorphism $\C^4 \cong \C^2 \otimes \C^2$.  Parke and Taylor's formula \eqref{eq:PT} is particularly remarkable because of the large number of Feynman diagrams that contribute to the computation.  For $n = 6$, this involves 220 Feynman diagrams.

For color-ordered amplitudes with more than two negative helicity gluons, the amplitude is more complicated.  Here is a six-gluon amplitude:
\begin{equation}\label{eq:NMHV}
A_6(1^-,2^-,3^-,4^+,5^+,6^+) = \frac{\langle 3|1+2|6]^3}{{\mathbf{P}}^2_{126}[21][16]\langle 34 \rangle \langle 45 \rangle \langle 5 |1 + 6|2]} +  \frac{\langle 1|5+6|4]^3}{{\mathbf{P}}^2_{156}[23][34]\langle 56 \rangle \langle 61 \rangle \langle 5 |1 + 6|2]},
\end{equation}
where the notation $\langle 3|1+2|6]$ is the product of a spinor column vector, a $2 \times 2$ matrix, and a spinor row vector, and ${\mathbf{P}}_{abc} = \p_a+\p_b+\p_c$.  In particular, when put over a common denominator, the numerator is a complicated polynomial.  Nevertheless, Parke and Taylor's discovery led Britto--Cachazo--Feng--Witten \cite{BCFW} to give a complete (BCFW) recursion for tree-level scattering amplitudes for massless gluons, a landmark result in the theory of scattering amplitudes.  

Could scattering amplitudes be determined by other principles, avoiding the elaborate summations \eqref{eq:FD} over Feynman diagrams?

\section{Schubert geometry.}\label{sec:Schubert}
We begin our journey into combinatorial geometry by identifying complexified momentum space $\C^4$ with the open Schubert cell in the Grassmannian $\Gr(2,4)$ of 2-planes in $\C^4$, or lines $L$ in $\P^3$, via the map
\begin{equation}\label{eq:G24}
\varphi: \p \longmapsto L = {\rm rowspan} \left(\begin{bmatrix} \Id_{2 \times 2} & M(\p) \end{bmatrix} \right) \in \Gr(2,4).
\end{equation}
The conformal group $\SO(4,2)$ is generated by the Lorentz group $\SO(3,1)$, translations, and inversions.  The complexified conformal group $\SO(6,\C)$ has the same Lie algebra as the special linear group $\SL(4,\C)$; this is the Dynkin diagram isomorphism $D_3 \cong A_3$.  The action of the conformal group extends to the natural action of $\SL(4,\C)$ on $\Gr(2,4)$.  We thus view $\Gr(2,4)$ as \emph{conformally compactified momentum space}.

\begin{proposition}\label{prop:G24}
Let $\y,\y’ \in \C^4$ be two momentum vectors, and $L,L' \in \Gr(2,4)$ be the images of $\y,\y’$ under \eqref{eq:G24}.  Then $(\y-\y’)^2 = 0$ if and only if the lines $L,L'$ intersect in $\P^3$.
\end{proposition}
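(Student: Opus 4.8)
The plan is to convert the projective-geometric statement ``$L$ and $L'$ meet'' into the vanishing of a single $4\times 4$ determinant, and then to recognize that determinant as $(\y-\y')^2$. Recall that lines $L,L'\subset\P^3$, viewed as $2$-dimensional linear subspaces of $\C^4$, intersect in $\P^3$ precisely when $L\cap L'\neq\{0\}$, i.e.\ when $\dim(L+L')\le 3$. Stacking a basis of $L$ above a basis of $L'$ produces a $4\times 4$ matrix whose row span is $L+L'$, so the lines meet if and only if that matrix is singular.

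Now use the explicit parametrization \eqref{eq:G24}. Both $\y$ and $\y'$ land in the open Schubert cell, so $L=\mathrm{rowspan}[\Id_{2\times2}\mid M(\y)]$ and $L'=\mathrm{rowspan}[\Id_{2\times2}\mid M(\y')]$, and the criterion above becomes
\[
\det\begin{bmatrix}\Id_{2\times2} & M(\y)\\ \Id_{2\times2} & M(\y')\end{bmatrix}=0.
\]
Subtracting the top block-row from the bottom one is an elementary operation preserving the determinant and yields $\begin{bmatrix}\Id_{2\times2} & M(\y)\\ 0 & M(\y')-M(\y)\end{bmatrix}$, whose determinant is $\det\!\big(M(\y')-M(\y)\big)$. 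Since $\p\mapsto M(\p)$ is linear, $M(\y')-M(\y)=M(\y'-\y)$; and by the defining property of this identification, $\det M(\p)=\p\cdot\p=\p^2$. Hence the determinant equals $(\y'-\y)^2=(\y-\y')^2$, and the lines meet if and only if $(\y-\y')^2=0$.

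There is no serious obstacle here: the argument is essentially forced once the correct $4\times 4$ matrix is written down. The only points that need care are (i) that both $L$ and $L'$ genuinely lie in the domain of $\varphi$, so that the block form with an $\Id_{2\times2}$ in the first two columns is legitimate, and (ii) the bookkeeping that the block row reduction is determinant-preserving and that $M$ is linear -- both routine. One could alternatively phrase the computation in Plücker coordinates, using that $L$ and $L'$ meet iff their Plücker vectors satisfy the quadratic incidence relation $\sum\pm p_{ij}(L)\,p_{kl}(L')=0$; this gives the same answer, but the block-matrix approach above is cleaner.
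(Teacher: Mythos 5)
Your proposal is correct and takes essentially the same approach as the paper: both reduce the incidence of $L$ and $L'$ to the vanishing of the $4\times 4$ determinant $\det\begin{bmatrix}\Id & M(\y)\\ \Id & M(\y')\end{bmatrix}$ and evaluate it as $\det(M(\y)-M(\y'))=(\y-\y')^2$. You simply spell out the block row reduction and the linearity of $M$, which the paper leaves implicit.
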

\begin{proof}
The two lines $L,L'$ intersect if and only if the $4 \times 4$ matrix formed from them is singular.  We compute
\begin{align*}
\det \begin{bmatrix} \Id & M(\y) \\ \Id & M(\y’) \end{bmatrix} = \det(M(\y)-M(\y’)) = (\y-\y’)^2.
\end{align*}
\end{proof}

\Cref{prop:G24} puts the study of massless momentum vectors within the framework of classical Schubert calculus.   

\begin{definition}
Let $\p_1,\ldots,\p_n$ be nonzero massless momentum vectors satisfying momentum conservation $\p_1+ \cdots + \p_n = 0$.  The \emph{momentum twistors} $Z_1,Z_2,\ldots,Z_n \in \P^3$ of $\{\p_1,\ldots,\p_n\}$ are defined by 
$$
Z_i = \varphi(\y_i) \cap \varphi(\y_{i+1}),
$$
where $\y_1,\ldots,\y_n \in \MS$ are chosen so that $\p_i = \y_i - \y_{i+1}$.
\end{definition}
Since $\p_i \neq 0$, we have $\y_i \neq \y_{i+1}$, so $Z_i$ is well-defined by \cref{prop:G24}.  The $\y_i$ can be recovered from the $Z_i$ by $\varphi(\y_i) = \overline{Z_{i-1} Z_{i}}$.
The $\y_i$ are determined up to translation, which under $\varphi$ is part of the $\SL(4,\C)$-action on $\P^3$. 

To summarize, the ``external kinematic data'' may be viewed as a configuration of $n$ points, called momentum-twistors, in $\P^3$ up to the simultaneous action of $\SL(4,\C)$.  Amplitudes are thus functions on the Grassmannian $\Gr(4,n)$, or on the configuration space $\Conf(4,n)$ of $n$ points in $\P^3$.  Momentum-twistors were first introduced by Hodges \cite{Hod}, and they make manifest new symmetries of amplitudes.  Namely, the complexified conformal group $\SL(4,\C)$ acting on the $Z_i$-s is not the usual conformal group in position space. This conformal group is responsible for \emph{dual conformal symmetry}, part of a larger Yangian symmetry \cite{DHP} that planar super Yang-Mills amplitudes enjoy.  

A number of physical principles constrain the possible functions on configuration space that can occur as scattering amplitudes.  The principle of \emph{locality} states that poles of the amplitude are simple and only occur when the sum of some subset of the momenta go on-shell, that is, $(\sum_{i \in S} \p_i)^2 = 0$ for some set $S$.  This follows from the Feynman diagram description of amplitudes where the denominators involve squared momenta of the virtual particles.  The principle of \emph{unitarity} states that the scattering matrix is unitary.  This implies that at a pole of the amplitude one sees a factorization into smaller amplitudes.  In the case of color-ordered amplitudes, the locality condition is that $(\p_i + \p_{i+1} + \cdots + \p_{j-1})^2 = 0$ for a cyclic interval $[i,j]$.  If $(\p_i + \p_{i+1} + \cdots + \p_{j-1})^2 = 0$, then $(\y_i - \y_{j})^2=0$, so by \cref{prop:G24}, the two lines $\varphi(\y_i) = \overline{Z_{i-1}Z_i}$ and $\varphi(\y_{j}) = \overline{Z_{j-1}Z_j}$ intersect.  Thus locality singles out the configurations where 
\begin{equation}
\label{eq:locality}
\mbox{the four momentum-twistors $Z_{i-1},Z_i,Z_{j-1},Z_j$ lie on a plane in $\P^3$,}
\end{equation}
or equivalently, we have the vanishing of the $4 \times 4$ determinant $\langle Z_{i-1} Z_i Z_{j-1} Z_j \rangle$.  In more geometric language, we are interested in functions on configuration space $\Conf(4,n)$ that detects when certain four-tuples of points become coplanar.

Recall from \cref{sec:amplitudes} that the internal edges of a Feynman diagram correspond to virtual particles that are typically off-shell, that is, do not satisfy $\p^2 = m^2$.  The integrand of the Feynman integral acquires singularities whenever these particles go on-shell and become real particles.  Since we consider only massless particles, this condition is that the squared momenta $\p^2 = m^2 = 0$ vanishes.  This on-shell condition occurs in the study of the singularities of amplitudes, and by complex analysis (for instance, the residue theorem) in the study of the amplitudes themselves.  For example, putting the internal edges of the Feynman box diagram on-shell gives rise to the classical Schubert problem of finding lines $L$ that intersect four given lines $L_1,L_2,L_3,L_4$ in $\P^3$ (see \cref{fig:fourbox}).
\begin{figure}
\begin{center}
$$
\begin{tikzpicture}
\begin{scope}[decoration={
    markings,
    mark=at position 0.5 with {\arrow{>}}}
    ] 
\draw[decoration={markings, mark=at position 0.5 with {\arrow{>}}},postaction={decorate}] (0,0) -- (1,0);
\draw[decoration={markings, mark=at position 0.5 with {\arrow{>}}},postaction={decorate}]  (1,0) -- (1,1);
\draw[decoration={markings, mark=at position 0.5 with {\arrow{>}}},postaction={decorate}]  (0,1) -- (1,1);
\draw[decoration={markings, mark=at position 0.5 with {\arrow{>}}},postaction={decorate}]  (0,0)  -- (0,1);
\draw[decoration={markings, mark=at position 0.5 with {\arrow{>}}},postaction={decorate}]  (-0.5,-0.5)--(0,0);
\draw[decoration={markings, mark=at position 0.5 with {\arrow{>}}},postaction={decorate}]  (-0.5,1.5)--(0,1);
\draw[decoration={markings, mark=at position 0.5 with {\arrow{>}}},postaction={decorate}]  (1.5,-0.5)--(1,0);
\draw[decoration={markings, mark=at position 0.5 with {\arrow{>}}},postaction={decorate}]  (1.5,1.5)--(1,1);
\node at (-0.3,-0.7) {\mini $\p_1$};
\node at (-0.3,1.7) {\mini $\p_2$};
\node at (1.3,-0.7) {\mini $\p_4$};
\node at (1.3,1.7) {\mini $\p_3$};
\node at (0.5,-0.2) {\mini $\q$};
\node at (-0.5,0.5) {\mini $\p_1-\q$};
\node at (1.6,0.5) {\mini $\p_4+\q$};
\node at (0.5,1.2) {\tiny $\p_1 + \p_2 - \q$};
\begin{scope}[shift={(5,0)}]
\draw (0.5,0.5)--(2,0.5);
\draw (0.5,0.5)--(-1,0.5);
\draw (0.5,0.5)--(.5,2);
\draw (0.5,0.5)--(.5,-1);
\draw (2,0.5)--(.5,2)--(-1,0.5)--(.5,-1)--(2,0.5);
\draw[fill] ($(0.5,0.5)$) circle (0.05cm);
\draw[fill] ($(-1,0.5)$) circle (0.05cm);
\draw[fill] ($(0.5,2)$) circle (0.05cm);
\draw[fill] ($(0.5,-1)$) circle (0.05cm);
\draw[fill] ($(2,0.5)$) circle (0.05cm);
\node at (-1.2,0.5) {\mini $\y_2$};
\node at (0.5,-1.2) {\mini $\y_1$};
\node at (2.2,0.5) {\mini $\y_4$};
\node at (0.5,2.2) {\mini $\y_3$};
\node at (0.6,0.65) {\mini $\y_1 + \q$};
\end{scope}
\begin{scope}[shift={(10,0)}]
\draw[fill] ($(0.5,0.5)$) circle (0.05cm);
\draw[fill] ($(-1,0.5)$) circle (0.05cm);
\draw[fill] ($(0.5,2)$) circle (0.05cm);
\draw[fill] ($(0.5,-1)$) circle (0.05cm);
\draw[fill] ($(2,0.5)$) circle (0.05cm);
\draw (0.5,0.5)--(2,0.5);
\draw (0.5,0.5)--(-1,0.5);
\draw (0.5,0.5)--(.5,2);
\draw (0.5,0.5)--(.5,-1);
\draw (2,0.5)--(.5,2)--(-1,0.5)--(.5,-1)--(2,0.5);
\node at (-1.2,0.5) {\mini $L_2$};
\node at (0.5,-1.2) {\mini $L_1$};
\node at (2.2,0.5) {\mini $L_4$};
\node at (0.5,2.2) {\mini $L_3$};
\node at (0.6,0.65) {\mini $L$};
\node at (-0.4,-0.35) {\mini $Z_1$};
\node at (-0.4,1.4) {\mini $Z_2$};
\node at (1.4,1.4) {\mini $Z_3$};
\node at (1.4,-0.4) {\mini $Z_4$};
\node at (-0.2,0.35) {\mini $W_2$};
\node at (0.68,-0.2) {\mini $W_1$};
\node at (1.2,0.35) {\mini $W_4$};
\node at (0.68,1.2) {\mini $W_3$};
\end{scope}
\end{scope}
\end{tikzpicture}
$$
\end{center}
\caption{Left: the Feynman box diagram has four incoming particles with momenta $\p_1,\p_2,\p_3,\p_4$.  Center: drawing the dual planar graph, we assign dual momenta to the vertices so that the differences along edges are momenta in the left diagram.  Right: we put all the original momenta on-shell, and replace dual momenta with lines $L = \varphi(\y)$ labeling the vertices.  Edges in this dual graph represent intersecting lines and are labeled by the point of intersection.}
\label{fig:fourbox}
\end{figure}
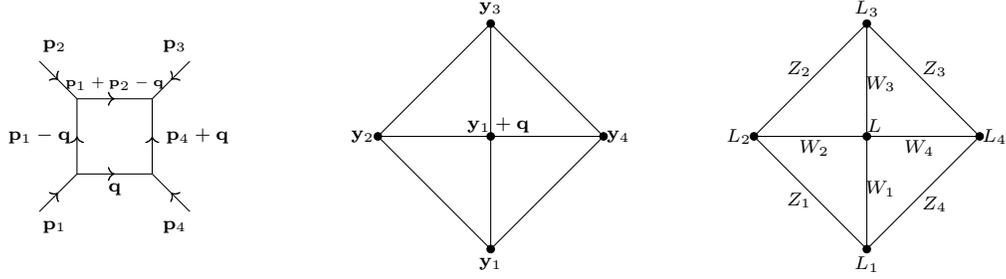

\section{Positroid geometry.}\label{sec:positroid}

When internal particles in a Feynman diagram satisfy the on-shell condition, we obtain graphs called \emph{on-shell diagrams}.  We will formalize the combinatorial geometry following the \emph{vector-relation configuration} conventions of \cite{AGPR}.  Instead of four-dimensions, we will work in arbitrary dimension.  Additional dimensions arize, roughly speaking, from (bosonization of) supersymmetry.

Let $G$ be a planar bipartite graph embedded into the disk, with boundary vertices labeled $1,2,\ldots,n$ in clockwise order.  The vertex set $B \sqcup W$ is partitioned into the black vertices $B$ and the white ones $W$.  Furthermore, we assume that boundary vertices belong to $W$ and are leaves.  Define $k = k(G) = |W| - |B|$.  We always assume that $G$ has an almost perfect matching: a collection of edges that uses all internal vertices, and (therefore) $n-k$ of the boundary vertices.

\begin{definition}
Let $G$ be a planar bipartite graph as above and $k = k(G)$.  A \emph{vector-relation configuration} $\WW$ for $G$ consists of 
\begin{itemize}
\item
a non-zero vector $W_w \in \C^k$ for each white vertex $w$, and
\item 
a non-trivial relation $\sum_i \alpha_{b,w_i} W_{w_i} = 0$ for each black vertex $b$, where $w_1,\ldots,w_r$ are the white vertices incident to $b$,
\end{itemize}
such that the matrix $\alpha_{b,w}$ is full-rank.
\end{definition}

To encode the vector-relation configuration, we may decorate the edge $(b,w)$ with the scalar $\alpha_{b,w}$.  From the boundary vertices, we obtain vectors $W_1,W_2,\ldots,W_n$, and we define
$$
V(\WW) = {\rm rowspan} \begin{bmatrix} | & | &  | & |  \\
W_1 & W_2 & \cdots & W_n \\
| & | &  | & |\end{bmatrix} \in \Gr(k,n).
$$

For a fixed $G$, the Zariski-closure of the set of subspaces $V(\WW)$ is a subvariety $\Pi_G$ of the Grassmannian.

\begin{definition}
A positroid variety $\Pi \subset \Gr(k,n)$ is a non-empty subvariety of the Grassmannian $\Gr(k,n)$ obtained by imposing rank conditions of the form 
$$
\rank(v_i,v_{i+1},\ldots,v_j) := \rank \begin{bmatrix} | & | &  | & |  \\
v_i & v_{i+1} & \cdots & v_{j} \\
| & | &  | & |\end{bmatrix}   \leq r_{ij}
$$
for various cyclic intervals $[i,j]$, and where $v_1,\ldots,v_n$ denote the columns of the $k \times n$ matrix representing a point in $\Gr(k,n)$.
\end{definition}

\begin{theorem}[{\cite{AGPR,LamCDM}}]
The subvariety $\Pi_G \subset \Gr(k,n)$ is a \emph{positroid variety}. 
\end{theorem}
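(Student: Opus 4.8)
The plan is to read off from $G$ an explicit collection of cyclic rank conditions and prove that $\Pi_G$ is precisely the positroid variety they cut out.

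\emph{Step 1 (irreducibility).} For a fixed $G$, the space of vector--relation configurations is a Zariski-open subset of an affine space: one chooses the vectors $W_w\in\C^k$ at the white vertices and the scalars $\alpha_{b,w}$ at the black vertices freely, subject only to the full-rank condition, which is a nonempty open condition (nonemptiness follows from the existence of an almost perfect matching, which exhibits a full-rank pattern in the matrix $\alpha_{b,w}$). This parameter space is therefore irreducible, hence so is its image under $\WW\mapsto V(\WW)$, and therefore so is its Zariski closure $\Pi_G\subset\Gr(k,n)$. (If $G$ is disconnected one first reduces to its connected components.)

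\emph{Step 2 (rank upper bounds).} Fix a cyclic interval $[i,j]$ and set $U_{[i,j]}=\mathrm{span}(W_i,W_{i+1},\dots,W_j)\subseteq\C^k$. Propagating the black-vertex relations $\sum_w\alpha_{b,w}W_w=0$ inward from the boundary vertices in $[i,j]$ shows that every white-vertex vector ``reachable through'' $[i,j]$ is forced to lie in a subspace whose dimension is at most a purely combinatorial quantity $r_{ij}=r_{ij}(G)$ — the maximum number of vertex-disjoint paths in $G$ from $[i,j]$ to its complement, equivalently the generic rank predicted by the almost perfect matchings of $G$ meeting $[i,j]$. Consequently $\rank(v_i,\dots,v_j)\le r_{ij}$ holds identically on $\Pi_G$. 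Granting that the family $\{r_{ij}\}$ is in fact attained at some point of $\Gr(k,n)$ (see Step 3), these conditions define a positroid variety $\Pi'$, and by construction $\Pi_G\subseteq\Pi'$.

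\emph{Step 3 (equality by dimension).} Since $\Pi_G$ and $\Pi'$ are both irreducible (the latter by \cite{KLS}) and $\Pi_G\subseteq\Pi'$, it suffices to prove $\dim\Pi_G=\dim\Pi'$. On one side, $\dim\Pi'$ is the dimension of the open positroid cell it contains, a standard statistic of the positroid $\{r_{ij}\}$. On the other side, $\dim\Pi_G$ is obtained from the parameter space of Step 1 by a standard count — its dimension in terms of the edges and vertices of $G$, minus the dimension of the gauge group of edge rescalings, which acts with finite generic stabilizer; after reducing to a \emph{reduced} graph $G$, which may be done without changing $\Pi_G$ via the moves of \cite{PosTP}, this count equals the number of faces of $G$ minus one, matching the positroid cell dimension. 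Hence $\Pi_G=\Pi'$, a positroid variety.

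\emph{Main obstacle.} The substantive input to Steps 2--3 is the genericity statement: a \emph{generic} configuration $\WW$ attains equality $\rank(v_i,\dots,v_j)=r_{ij}$ in \emph{every} window, so that the generic matroid of $V(\WW)$ is exactly the positroid with cyclic ranks $\{r_{ij}\}$, with no accidental further rank drops; equivalently, for reduced $G$ the map $\WW\mapsto V(\WW)$ is dominant onto a variety of the expected dimension (indeed surjective onto the corresponding positroid cell). This is the heart of the boundary-measurement theory of \cite{PosTP}: one expresses the Pl\"ucker coordinates of $V(\WW)$ as signed sums over the almost perfect matchings of $G$ and exhibits, for each basis of the target positroid, a configuration on which the relevant matching sum is manifestly nonzero — for instance by choosing positive real edge weights, which forces all Pl\"ucker coordinates to be nonnegative and identifies the generic matroid as a positroid. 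Granting this, the remaining ingredients — irreducibility, the elementary linear-algebra upper bounds, and the comparison of two now-known dimensions — are routine.
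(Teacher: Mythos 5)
This statement appears in the paper as a cited result from \cite{AGPR,LamCDM}; the survey gives no proof, so there is no in-paper argument to compare against. Your overall strategy — irreducibility of $\Pi_G$, cyclic rank upper bounds read off from the graph, and equality with the candidate positroid variety by a dimension count, with the genericity input deferred to Postnikov's boundary measurement theory — is a sensible outline of the kind of argument that does appear in the cited works. However, there is a genuine error in Step~1 that would make the argument fail as written, and it propagates into Step~3.

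You assert that the space of vector--relation configurations is a Zariski-open subset of an affine space in which ``one chooses the vectors $W_w\in\C^k$ and the scalars $\alpha_{b,w}$ freely, subject only to the full-rank condition.'' That cannot be right: the black-vertex relations $\sum_w \alpha_{b,w} W_w = 0$ are bilinear \emph{constraints} relating $W$ and $\alpha$, not free choices. If they were dropped, the boundary vectors $W_1,\dots,W_n$ would be unconstrained and $\Pi_G$ would be all of $\Gr(k,n)$, contradicting the theorem. Your own Step~2 relies on ``propagating the black-vertex relations,'' so the constraints are precisely the content. The fix is routine but must be said: writing $W$ for the $k\times|W|$ matrix of white-vertex vectors and $\alpha$ for the $|B|\times|W|$ matrix (supported on the edges), the constraint is $W\alpha^{T}=0$, i.e.\ $\operatorname{rowspan}(W)\subseteq\ker\alpha$; when $\alpha$ is full rank this kernel is $k$-dimensional, so the fiber over each full-rank $\alpha$ is an affine space $\cong\C^{k\times k}$, and the total space is a bundle over an irreducible base, hence irreducible. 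In fact $V(\WW)$ depends only on $\alpha$ (the columns of $W$ are determined up to the $\mathrm{GL}_k$ row action, which does not change the rowspan), so the true parameter space is an open subset of $\C^{E(G)}$. This also cleans up the dimension count in Step~3: one gets $|E(G)|$ edge parameters minus $|W|+|B|-1$ for the gauge group of vertex rescalings, giving (number of faces of $G$) $-1$ by Euler's formula, which is the expected positroid cell dimension for a reduced $G$. With Step~1 corrected in this way, the remaining structure of your argument is sound, and you have correctly isolated the nontrivial genericity/surjectivity input and attributed it to \cite{PosTP}.
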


Positroid varieties are irreducible subvarieties of the Grassmannian that can also be obtained as the intersection of $n$ cyclically rotated Schubert varieties.  The positroid stratification of the Grassmannian enjoys many favorable properties similar to the Schubert stratification: for instance, the open strata are irreducible and smooth, and the closed strata are normal and Cohen-Macaulay.  Positroid subvarieties of the Grassmannian can be indexed by \emph{bounded affine permutations}, or by \emph{Grassmann necklaces}, or by equivalence classes of \emph{plabic graphs} \cite{PosTP,KLS}. 

In summary, to each (planar) on-shell diagram $G$ we obtain a positroid subvariety $\Pi_G$ of the Grassmannian.  In turn, the geometry of the positroid variety produces functions as follows.  The momentum twistors $Z_1,Z_2,\ldots,Z_n$ determine a subGrassmannian $\Gr(k,Z) = \Gr(k,n-k-4) \subset \Gr(k,n)$.  One obtains a rational function of $Z_1,Z_2,\ldots,Z_n$, called the \emph{on-shell function}, by summing the evaluation of the canonical form $\Omega(\Pi_G)$ (see \cref{sec:positive}) of the positroid variety at the intersection points of $\Pi_G$ with $\Gr(k,Z)$; see \cite{Grassbook,BHmom}.  This brings the computation of scattering amplitudes into direct contact with the intersection theory of positroid varieties.  

Knutson, Speyer, and I showed that the cohomology class of a positroid variety $\Pi_f$ is represented by an affine Stanley symmetric function $\tF_f$ \cite{LamAS}.  The Schur function expansion of the symmetric function $\tF_f$ encodes all the intersection numbers between positroid varieties and Schubert varieties.  A direct combinatorial description of the intersection numbers of the most physical interest follows from the \emph{affine Pieri rule} of \cite{Lee, LLMS,Lamampl}.

\begin{figure}
\begin{center}
$$
\begin{tikzpicture}
\draw (0,0.6)--(0.5,-0)--(0,-0.6)--(-0.5,0)--(0,0.6);
\draw (0.5,0)--(0.9,0);
\draw (-0.5,0)--(-0.9,0);
\draw (0.9,0)--(22.5:1.2);
\draw (0.9,0)--(-22.5:1.2);
\draw (0,0.6)--(67.5:1.2);
\draw (0,0.6)--(112.5:1.2);
\draw (0,-0.6)--(-67.5:1.2);
\draw (0,-0.6)--(-112.5:1.2);
\draw (-0.9,0)--(157.5:1.2);
\draw (-0.9,0)--(-157.5:1.2);
\node at (157.5:1.35) {\mini $1$};
\node at (112.5:1.35) {\mini $2$};
\node at (67.5:1.35) {\mini $3$};
\node at (22.5:1.35) {\mini $4$};
\node at (-22.5:1.35) {\mini $5$};
\node at (-67.5:1.35) {\mini $6$};
\node at (-112.5:1.35) {\mini $7$};
\node at (-157.5:1.35) {\mini $8$};
\draw (0,0) circle (1.2cm);
\draw[fill=white] (0,0.6) circle (0.07cm);
\draw[fill=white] (0,-0.6) circle (0.07cm);
\draw[fill] ($(0.9,0)!0.5!(22.5:1.2)$) circle (0.07cm);
\draw[fill] ($(0.9,0)!0.5!(-22.5:1.2)$) circle (0.07cm);
\draw[fill] ($(0,0.6)!0.5!(67.5:1.2)$) circle (0.07cm);
\draw[fill] ($(0,0.6)!0.5!(112.5:1.2)$) circle (0.07cm);
\draw[fill] ($(0,-0.6)!0.5!(-67.5:1.2)$) circle (0.07cm);
\draw[fill] ($(0,-0.6)!0.5!(-112.5:1.2)$) circle (0.07cm);
\draw[fill] ($(-0.9,0)!0.5!(157.5:1.2)$) circle (0.07cm);
\draw[fill] ($(-0.9,0)!0.5!(-157.5:1.2)$) circle (0.07cm);
\draw[fill] (-0.5,0) circle (0.07cm);
\draw[fill] (0.5,0) circle (0.07cm);
\draw[fill=white] (-0.9,0.0) circle (0.07cm);
\draw[fill=white] (0.9,-0.0) circle (0.07cm);

\begin{scope}[shift={(5,0)}]
\draw (0,0.6)--(0.5,-0)--(0,-0.6)--(-0.5,0)--(0,0.6);
\draw (0.5,0)--(0.9,0);
\draw (-0.5,0)--(-0.9,0);
\draw (0.9,0)--(22.5:1.2);
\draw (0.9,0)--(-22.5:1.2);
\draw (0,0.6)--(67.5:1.2);
\draw (0,0.6)--(112.5:1.2);
\draw (0,-0.6)--(-67.5:1.2);
\draw (0,-0.6)--(-112.5:1.2);
\draw (-0.9,0)--(157.5:1.2);
\draw (-0.9,0)--(-157.5:1.2);
\node at (157.5:1.45) {\mini $W_1$};
\node at (112.5:1.45) {\mini $W_2$};
\node at (67.5:1.45) {\mini $W_3$};
\node at (22.5:1.45) {\mini $W_4$};
\node at (-22.5:1.45) {\mini $W_5$};
\node at (-67.5:1.45) {\mini $W_6$};
\node at (-112.5:1.45) {\mini $W_7$};
\node at (-157.5:1.45) {\mini $W_8$};
\draw (0,0) circle (1.2cm);
\draw[fill=white] (0,0.6) circle (0.07cm);
\draw[fill=white] (0,-0.6) circle (0.07cm);
\draw[fill] (-0.5,0) circle (0.07cm);
\draw[fill] (0.5,0) circle (0.07cm);
\draw[fill] ($(0.9,0)!0.5!(22.5:1.2)$) circle (0.07cm);
\draw[fill] ($(0.9,0)!0.5!(-22.5:1.2)$) circle (0.07cm);
\draw[fill] ($(0,0.6)!0.5!(67.5:1.2)$) circle (0.07cm);
\draw[fill] ($(0,0.6)!0.5!(112.5:1.2)$) circle (0.07cm);
\draw[fill] ($(0,-0.6)!0.5!(-67.5:1.2)$) circle (0.07cm);
\draw[fill] ($(0,-0.6)!0.5!(-112.5:1.2)$) circle (0.07cm);
\draw[fill] ($(-0.9,0)!0.5!(157.5:1.2)$) circle (0.07cm);
\draw[fill] ($(-0.9,0)!0.5!(-157.5:1.2)$) circle (0.07cm);
\node at (-0.65,-0.15) {\mini $W_*$};
\draw[fill=white] (-0.9,0.0) circle (0.07cm);
\draw[fill=white] (0.9,-0.0) circle (0.07cm);
\end{scope}

\begin{scope}[shift={(10,0)}]
\draw (0,0.6)--(0.5,-0)--(0,-0.6)--(-0.5,0)--(0,0.6);
\draw (0.5,0)--(0.9,0);
\draw (-0.5,0)--(-0.9,0);
\draw (0.9,0)--(22.5:1.2);
\draw (0.9,0)--(-22.5:1.2);
\draw (0,0.6)--(67.5:1.2);
\draw (0,0.6)--(112.5:1.2);
\draw (0,-0.6)--(-67.5:1.2);
\draw (0,-0.6)--(-112.5:1.2);
\draw (-0.9,0)--(157.5:1.2);
\draw (-0.9,0)--(-157.5:1.2);
\node at (157.5:1.35) {\mini $1$};
\node at (112.5:1.35) {\mini $2$};
\node at (67.5:1.35) {\mini $3$};
\node at (22.5:1.35) {\mini $4$};
\node at (-22.5:1.35) {\mini $5$};
\node at (-67.5:1.35) {\mini $6$};
\node at (-112.5:1.35) {\mini $7$};
\node at (-157.5:1.35) {\mini $8$};
\draw (0,0) circle (1.2cm);
\draw[fill=white] (0,0.6) circle (0.07cm);
\draw[fill=white] (0,-0.6) circle (0.07cm);
\draw[fill] ($(0.9,0)!0.5!(22.5:1.2)$) circle (0.07cm);
\draw[fill] ($(0.9,0)!0.5!(-22.5:1.2)$) circle (0.07cm);
\draw[fill] ($(0,0.6)!0.5!(67.5:1.2)$) circle (0.07cm);
\draw[fill] ($(0,0.6)!0.5!(112.5:1.2)$) circle (0.07cm);
\draw[fill] ($(0,-0.6)!0.5!(-67.5:1.2)$) circle (0.07cm);
\draw[fill] ($(0,-0.6)!0.5!(-112.5:1.2)$) circle (0.07cm);
\draw[fill] ($(-0.9,0)!0.5!(157.5:1.2)$) circle (0.07cm);
\draw[fill] ($(-0.9,0)!0.5!(-157.5:1.2)$) circle (0.07cm);
\draw[fill] (-0.5,0) circle (0.07cm);
\draw[fill] (0.5,0) circle (0.07cm);
\draw[fill=white] (-0.9,0.0) circle (0.07cm);
\draw[fill=white] (0.9,-0.0) circle (0.07cm);

\draw[very thick, color = red, -{Triangle[length=1.5mm, width=1.6mm]}] (157.5:1.2)--(-0.9,0);
\draw[very thick, color = red, -{Triangle[length=1.5mm, width=1.6mm]}] (-0.9,0)--(-0.5,0);
\draw[very thick, color = red, -{Triangle[length=1.5mm, width=1.6mm]}] (-0.5,0)--(0,-0.6);
\draw[very thick, color = red, -{Triangle[length=1.5mm, width=1.6mm]}] (0,-0.6)--(0.5,0);
\draw[very thick, color = red, -{Triangle[length=1.5mm, width=1.6mm]}] (0.5,0)--(0.9,0);
\draw[very thick, color = red, -{Triangle[length=1.5mm, width=1.6mm]}] (0.9,0)--(22.5:1.2);
\end{scope}
\end{tikzpicture}
$$
\end{center}
\caption{Left: an on-shell diagram, or plabic graph, where boundary vertices are assumed to be white.  Center: a vector-relation configuration consists of vectors $W_w$ assigned to each white vertex $w$, including the boundary vertices.  Right: the trip permutation sends $1$ to $4$ by turning left (right) at white (black) vertices. }
\label{fig:W}
\end{figure}
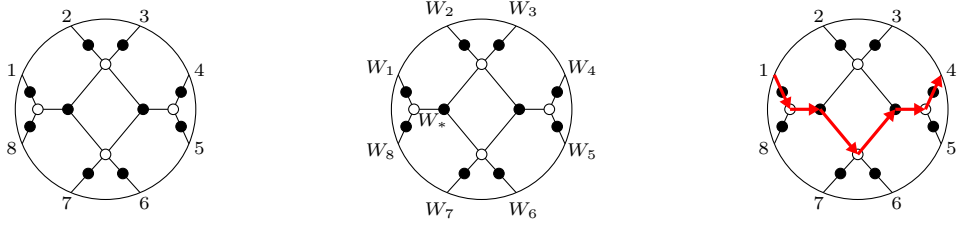

\begin{example}
Consider the on-shell diagram with $n = 8$ and $k = 2$, shown in \cref{fig:W}.  The $W$-s are 2-vectors.  The relation assumption says that $W_1$ and $W_*$ are parallel, and so are $W_8$ and $W_*$.  We conclude that $W_1$ and $W_8$ are parallel, and similarly, so are $\{W_2, W_3\}$, and $\{W_4, W_5\}$, and $\{W_6, W_7\}$.  These four rank conditions 
$$
\rank(v_1,v_8) \leq 1, \qquad \rank(v_2,v_3) \leq 1, \qquad \rank(v_4,v_5) \leq 1, \qquad \rank(v_6,v_7) \leq 1, 
$$
define the positroid variety $\Pi_G$ for this graph $G$.

Using Postnikov's ``rules of the road’’ (turn left at white and turn right at black), we may compute that the bounded affine permutation of this graph is $f = [4,3,6,5,8,7,10,9]$ in one-line window notation.  That is $f(1) = 4$, $f(2) = 3$, and so on.  The affine Stanley symmetric function is equal to $\tF_f = s_1^4$, where $s_1$ is the Schur function.  The coefficient of $s_{22}$ in the expansion of $s_1^4$ is equal to $2$, which is the intersection number relevant for the computation of the on-shell function.  See \cite{Lamampl} for an explanation of this computation.
\end{example}

The on-shell functions associated to these diagrams are important building blocks for amplitudes.  Indeed, the BCFW recursion for tree-level amplitudes can be made completely graphical using on-shell diagrams.  More generally, at loop-level one obtains the integrand using on-shell diagrams, and the formulation is most natural in the maximal supersymmetric extension of gauge theory: $N=4$ super Yang-Mills (SYM) theory.  On the one hand, supersymmetry is an elegant way to keep track of the different ``helicity sectors'' of gluon scattering, that is, it is a generating function for the different positions of $+$'s and $-$'s in $A_n(1^+,2^-,3^-,\ldots,n^+)$.  On the other hand, the $N=4$ SYM theory postulates a multiplet of particles: in addition to the gluons discussed in \cref{sec:gauge}, one has four fermions (spin $1/2$ particles) and six scalars (spin $0$ particles). 

\begin{example}
We consider the BCFW recursion for $k=1$.  We are thus working inside the Grassmannians $\Gr(1,n) = \P^{n-1}$, and the positroid varieties are coordinate hypersurfaces.  In this case, the plabic graphs consist of a single white interior vertex connected to $r$ of the boundary vertices (with degree two black vertices added since our convention is that boundary vertices are white), and we have $r = 5$ for the ones relevant for the amplitude.  Let $[a,b,c,d,e]$ denote the on-shell function corresponding to the graph where a single white interior vertex is connected to boundary vertices with labels $a,b,c,d,e$.  The BCFW recursion for $N=4$ SYM tree amplitudes in these coordinates is:
$$
A_{n,k=1} = A_{n-1,k=1}+ \sum_{j=3}^{n-2}[1,j-1,j,n-1,n],
$$
which can be solved to give
\begin{equation}\label{eq:k1}
A_{n,k=1} = \sum_{i < j}[1, i-1, i, j-1,j].
\end{equation}
(There is an overall ``MHV'' factor that appears when changing to momentum-twistor coordinates.  We do not include it in this expression.)
By cyclic symmetry, we also obtain other possible expressions for the amplitude.  For example, with $n = 6$, we get two possible expressions
\begin{equation}\label{eq:2triang}
A_{n,k=1} = [1,2,3,4,5]+[1,2,3,5,6]+[1,3,4,5,6] = [2,3,4,5,6]+[1,2,4,5,6]+[1,2,3,4,6].
\end{equation}
\end{example}

We have not discussed the Feynman diagrams of Yang-Mills theory.  For a more comprehensive discussion of how to relate the formulae \eqref{eq:2triang} directly to the original Feynman diagrams of Yang-Mills theory, we refer the reader to the review article \cite{DPSV}.

\section{Amplituhedron geometry.}\label{sec:ampli}
Hodges \cite{Hod} proposed that \eqref{eq:2triang} should be viewed as two different triangulations of some geometry.  This suggestion was supported by the fact that the rational functions $[a,b,c,d,e]$ had ``spurious poles” that were not present in the amplitude, or similarly the pole $\langle 5 |1 + 6|2]$ that turns out to cancel out in \eqref{eq:NMHV}.  These poles would correspond to internal boundaries of the triangulation that were not boundaries of the geometry itself.  Arkani-Hamed and Trnka \cite{AT} constructed the \emph{amplituhedron} as a geometric representation of the planar SYM amplitude.

Let $\Gr(k,n)_{\geq 0}$ denote the totally nonnegative Grassmannian \cite{PosTP,LusTP}, the locus inside the real Grassmannian where all Pl\"ucker coordinates are nonnegative.  Let $Z: \R^n \to \R^{k+m}$ be a linear map represented by a real matrix.  We call $Z$ \emph{positive} if it has positive maximal ($k+m \times k+m$) minors.  The linear map $Z$ induces a rational map 
$$
Z:\Gr(k,n) \dashrightarrow \Gr(k,k+m), \qquad V \mapsto Z(V).
$$

\begin{definition}
Let $Z:\R^n \to \R^{k+m}$ be positive.  The \emph{amplituhedron} is the image $A_{n,k,m} := Z(\Gr(k,n)_{\geq 0}) \subset \Gr(k,k+m)$ of the totally nonnegative Grassmannian. \end{definition}

The amplituhedron is a compact subspace of the real Grassmannian $\Gr(k,k+m)(\R)$.  Perhaps the most potent way to think of the amplituhedron is as a Grassmannian polytope.  When $k = 1$, the space $\Gr(1,n)_{\geq 0}$ is simplex $\Delta^{n-1}$ in projective space $\P^{n-1}$.  The image of $\Delta^{n-1}$ under a linear map $Z: \P^{n-1} \to \P^{d-1}$ is a polytope.  We call the image of $\Gr(k,n)_{\geq 0}$ under an arbitrary linear map a \emph{Grassmann polytope} \cite{LamCDM}.  For a positive $Z$, one obtains a cyclic polytope, and thus, the amplituhedron is a Grassmannian analogue of the cyclic polytope.

The central questions about Grassmann polytopes concern their combinatorial structure (face poset) and homeomorphism type as a stratified space.  In \cite{GKL1}, we showed with Galashin and Karp that $\Gr(k,n)_{\geq 0}$, the ``Grassmannian simplex'', is homeomorphic to a closed ball.  The intersection of $\Gr(k,n)_{\geq 0}$ with the stratification of the Grassmannian by positroid varieties endows $\Gr(k,n)_{\geq 0}$ with the structure of a regular CW-complex \cite{GKL3} whose open cells are called \emph{positroid cells} \cite{PosTP}.

It is conjectured that the amplituhedron is a ball for any positive $Z$.  This is known \cite{GKL1, BlKa} for some choices of $Z$, including the ``cyclically symmetric'' one.  The facet structure of the amplituhedron is not known except in polytope case or the $m =2$ case \cite{Lamm=2}, and even the question of the definition of the faces is not completely settled.  However, among the boundaries of the $m = 4$ amplituhedron are the hypersurfaces $\langle Y Z_{i-1} Z_i Z_{j-1} Z_j \rangle = 0$.  Here, $Y \in \Gr(k,k+4)$ is represented by a $k \times k+4$ matrix, the $Z$-s\footnote{We have $4$ spacetime dimensions and $k$ bosonized supersymmetry dimensions.} are vectors in $\R^{k+4}$, and $\langle Y Z_{i-1} Z_i Z_{j-1} Z_j \rangle$ denotes a $(k+4) \times (k+4)$ determinant.  This matches with the location of poles of the amplitude imposed by locality \eqref{eq:locality}, and was one of the ``smoking guns” pointing towards the amplituhedron.  Readers familiar with polytopal geometry will also recognize the appearance of Gale's evenness condition for the faces of the cyclic polytope.

``Triangulations” of the amplituhedron are certain collections of positroid cells whose images cover the amplituhedron.  In \cite{AT}, it is proposed that for $m = 4$, triangulations of the amplituhedron would give rise to collection of positroid varieties that give a formula for the amplitude: simply replace each positroid variety by the corresponding on-shell function and take the sum.  Indeed, in the case $k=1$, the amplituhedron is a cyclic polytope and \eqref{eq:k1} represents a triangulation of the cyclic polytope.  Significant advances in the study of triangulations of the $m=4$ amplituhedron have been made over the last decade; see \cite{GLparity,ELPSTW}.

More generally, understanding the combinatorics and geometry of Grassmann polytopes is a rich area for future exploration.

Let us emphasize the essential role that planarity plays in this story.  In physics, the study of color-ordered amplitudes leads to the planarity condition on Feynman diagrams and on-shell diagrams.  In combinatorics, the study of total positivity is intertwined with the theory of planar graphs in the classical works of Lindstr\"om and Gessel-Viennot, and subsequently in Postnikov's generalization to the totally nonnegative Grassmannian.  Planarity and positivity go hand-in-hand.

\section{Positive geometry.}\label{sec:positive}
It has long been a dream in physics that scattering amplitudes could be characterized as the unique complex analytic function satisfying certain properties.  As we have already mentioned, \emph{locality} characterizes the location of poles of the amplitude, and \emph{unitarity} further constrains the behavior of the amplitude near these singularities.  Could one find a set of such conditions that completely determine the amplitude?  With Arkani-Hamed and Bai \cite{ABL}, we introduced \emph{positive geometries} to mathematically formulate such a phenomenon.  Positive geometries are a mechanism that produces functions (such as the amplitude) from a geometry (such as the amplituhedron), without relying on any particular triangulation.

We first recall the definition of the residue of a meromorphic form.  Let $X$ be a complex $d$-dimensional irreducible, normal algebraic variety.  Let $H \subset X$ an (irreducible) hypersurface, and $\omega$ be a meromorphic $d$-form on $X$ with at most simple poles on $H$.  Let $f$ be a local coordinate such that $f$ vanishes to order one on $H$.  Write 
$$
\omega = \eta \wedge \frac{df}{f} + \eta' 
$$
for a $(d-1)$-form $\eta$ and a $d$-form $\eta'$, both without poles along $H$.  Then, the \emph{residue} $\Res_H \omega$ is the $(d-1)$-form on $H$ defined as the restriction
\begin{equation}\label{eq:resdef}
\Res_H \omega:= \eta|_H,
\end{equation}
and does not depend on the choices of $f, \eta,\eta'$.

\medskip
We now further assume that $X$ is a variety defined over $\R$ and equip the real points $X(\R)$ with the analytic topology.
Let $X_{\geq 0} \subset X(\R)$ be a closed semialgebraic subset such that the interior $X_{>0} = \Int(X_{\geq 0})$ is an oriented real $d$-manifold, and the closure of $X_{>0}$ recovers $X_{\geq 0}$.
Let $\partial X_{\geq 0}$ denote the boundary $X_{\geq 0} \setminus X_{ > 0}$ and let $\partial X$ denote the Zariski closure of $\partial X_{\geq 0}$.  Let $C_1,C_2,\ldots,C_r$ be the codimension 1 irreducible components of $\partial X$.  
Let $C_{i, \geq 0}$ denote the analytic closures of the interior of $C_i \cap \partial X_{\geq 0}$ in $C_i(\R)$.
The spaces $C_{1,\geq 0}, C_{2,\geq 0},\ldots, C_{r,\geq 0}$ are called the boundary components, or facets, of $X_{\geq 0}$.

\begin{definition}\label{def:PG}
  We call $(X,X_{\geq 0})$ a \emph{positive geometry} if there exists a unique nonzero rational $d$-form $\Omega(X,X_{\geq 0})$, called the \emph{canonical form}, satisfying the recursive axioms:
  \begin{enumerate}
    \item If $d = 0$, then $X = X_{\geq 0}= \pt$ is a point and we define $\Omega(X,X_{\geq 0})=\pm 1$ depending on the orientation.
    \item If $d>0$, then we require that $\Omega(X,X_{\geq 0})$ has poles only along the boundary components $C_i$, these poles are simple, and for each $i =1,2,\ldots,r$, we have
    \begin{equation}\label{eq:PGdef}
    \Res_{C_i}\Omega(X,X_{\geq 0})=\Omega(C_i,C_{i,\geq0}).
    \end{equation}
  \end{enumerate}
\end{definition}

\begin{example}
The interval $[a,b] \subset \R \subset \P^1(\R)$ sitting inside real projective space is an example of a positive geometry.  Its canonical form is 
\begin{equation}\label{eq:int}\Omega([a,b]) =\left( \frac{1}{x-a}  -\frac{1}{x-b}\right) dx,
\end{equation}
with simple poles at $x = a$ and $x = b$.  The residues at these two poles are opposite, and this matches with the fact that the boundary points $a$ and $b$ of $[a,b]$ have opposite orientations.  The fact that the residues are opposite guarantees that there is no pole at infinity.  The form $\Omega$ is the unique rational 1-form on $\P^1$ with these residues.
\end{example}

The central motivating conjecture in positive geometry is the following.
\begin{conjecture}
The amplituhedron $A_{n,k,m}$ is a positive geometry, and the canonical form $\Omega(A_{n,k,4})$ ``is'' the $N=4$ SYM amplitude.
\end{conjecture}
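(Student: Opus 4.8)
The plan is to prove the statement in two stages: first that $(\Gr(k,k+m),A_{n,k,m})$ is a positive geometry in the sense of \cref{def:PG}, and then that its canonical form reproduces the amplitude. For the first stage I would construct the canonical form by \emph{triangulation}. A triangulation of the amplituhedron is a collection of positroid cells $\Pi_c\cap\Gr(k,n)_{\geq 0}$ whose images under $Z$ cover $A_{n,k,m}$ with pairwise disjoint interiors and with $Z$ injective on each cell. Each positroid cell is, after a Postnikov-type parametrization, a product of copies of $\R_{>0}$, and hence carries the obvious logarithmic form $\bigwedge_i dt_i/t_i$; pushing this forward along $Z$ yields a rational $mk$-form $\omega_c$ on $\Gr(k,k+m)$ whose poles are the images of the facets of the cell. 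I would then take as the candidate canonical form
$$
\Omega \;:=\; \sum_c \omega_c .
$$

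The claims to verify are: (a) $\Omega$ is nonzero — with consistent orientations its restriction to the interior $A_{n,k,m}^{\circ}$ is a positive density; (b) the poles of $\Omega$ lie only on the genuine boundary hypersurfaces $C_i$ of $A_{n,k,m}$ and are simple, the \emph{spurious} poles along internal walls of the triangulation cancelling because the two adjacent cells induce opposite orientations on a shared wall and so contribute equal and opposite residues there — the same mechanism that kills the pole at infinity for $\Omega([a,b])$ in \eqref{eq:int}; and (c) the recursive axiom $\Res_{C_i}\Omega=\Omega(C_i,C_{i,\geq 0})$ holds for every $i$. Granting (a)--(c), $(\Gr(k,k+m),A_{n,k,m})$ is a positive geometry with canonical form $\Omega$, and uniqueness — hence independence of the triangulation — is automatic, since $\Gr(k,k+m)$ is Fano and so carries no nonzero global holomorphic top form: any two forms with the same polar divisor and the same residues differ by a holomorphic top form, hence coincide.

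For $m=4$ the existence of a triangulation, and with it the inputs to (a) and (b), can be imported from the theorem that the BCFW cells triangulate $A_{n,k,4}$ \cite{ELPSTW}; the local analysis establishing (b) should then be routine at each internal wall. The second stage follows with little further work: by construction the pushforward $\omega_c$ of a BCFW cell is the on-shell function attached to the corresponding positroid variety \cite{Grassbook}, and the BCFW recursion \cite{BCFW} expresses the $N=4$ SYM tree amplitude as exactly the sum of these on-shell functions, so $\Omega(A_{n,k,4})$ equals the amplitude times the standard top-form on $\Gr(k,k+4)$, up to the overall MHV prefactor one strips off in momentum-twistor coordinates. The cases $k=1$ (cyclic polytope) and $m=1,2$ \cite{Lamm=2} already fit this template and would serve as base cases.

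The hard part is step (c). Verifying the recursive residue axiom requires an exact description of the facets $C_{i,\geq 0}$ of the $m=4$ amplituhedron \emph{together} with the knowledge that each is itself a positive geometry whose canonical form equals $\Res_{C_i}\Omega$. As emphasized in \cref{sec:ampli}, the facet structure of $A_{n,k,4}$ is not known in general — only the locality hypersurfaces $\langle Y Z_{i-1}Z_iZ_{j-1}Z_j\rangle=0$ are known to occur — and even the correct notion of ``face'' is not fully settled, so the induction cannot presently be closed. The natural route is to show that every facet of $A_{n,k,4}$ is again an amplituhedron-like object, i.e.\ a lower-complexity Grassmann polytope cut out by a positive map, which would reduce the conjecture to itself in smaller $n$ and $k$; pinning down this facet description — equivalently, controlling which boundary positroid cells of $\Gr(k,n)_{\geq 0}$ survive under $Z$ — is where the essential work lies. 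For general $m$ one faces the additional, currently open, problem of establishing that triangulations of $A_{n,k,m}$ exist at all.
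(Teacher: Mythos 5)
You are attempting to prove what the paper explicitly labels a \emph{conjecture}; the paper offers no proof, only a list of special cases in which the first half is known ($k=1$ via \cref{thm:dual}; $m=1$ via \cite{KW}; $k=m=2$ via \cite{RST}; $n=k+m$ via \cite{ABL,LamPosGeom}). Your triangulation program --- sum the pushforwards of the logarithmic forms on the BCFW cells, argue that spurious poles cancel pairwise across internal walls, then verify the residue recursion --- is the strategy envisioned in the physics literature, and your uniqueness observation (that $\Gr(k,k+m)$ is Fano and so carries no nonzero global holomorphic top form, whence any two candidates with matching polar data coincide) is correct and standard.

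The gap you flag is the genuine obstruction, and you have located it precisely. Verifying step (c), the recursive residue axiom $\Res_{C_i}\Omega=\Omega(C_i,C_{i,\geq 0})$, requires knowing what the facets $C_{i,\geq 0}$ of $A_{n,k,4}$ actually are, showing each is itself a positive geometry, and matching its canonical form with $\Res_{C_i}\Omega$. As \cref{sec:ampli} records, the facet structure of $A_{n,k,m}$ is not known beyond the polytope case $k=1$ and the case $m=2$, and even the correct notion of ``face'' for the amplituhedron is not fully settled; only the locality hypersurfaces $\langle Y Z_{i-1}Z_iZ_{j-1}Z_j\rangle=0$ are established to occur among the boundaries. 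Nothing you import from \cite{ELPSTW,GLparity} circumvents this: those results supply triangulations --- hence a candidate $\Omega$ and the internal-wall cancellation in your step (b) --- but neither a facet classification nor a proof that the boundary pieces are themselves positive geometries, which is exactly what the recursive axiom demands. What you have written is a sound and useful program, not a proof; the statement remains, as the paper presents it, the central open motivating conjecture of the subject.
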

The statement that $A_{n,k,m}$ is a positive geometry is known in some cases: for $k = 1$, the amplituhedron $A_{n,1,m}$ is a polytope, so it follows from \cref{thm:dual}; for $m = 1$, it follows from \cite{KW}; for $k = m = 2$, it follows from \cite{RST}; for $n = k+m$ the amplituhedron $A_{k+m,k,m}$ is the totally nonnegative Grassmannian $\Gr(k,n)_{\geq 0}$ and it follows from \cite{ABL,LamPosGeom}.

There is a long and growing list of spaces that have been proven to be positive geometries: polytopes, positive parts of toric varieties, totally positive parts of Grassmannians and flag varieties, the positive part of $\M_{0,n}$, and so on; see \cite{LamPosGeom}.  The recent volume \cite{PGvolume} gives a snapshot of some recent advances in the field.

\section{Polytopal geometry.}\label{sec:polytope}
Polytopes are the most familiar positive geometries.  The survey \cite{LamPosGeom} introduces positive geometries from this perspective.  Among the many formulae for the canonical form of a polytope, we have the dual volume formula.

\begin{theorem}\label{thm:dual}
Let $P \subset \R^d \subset \P^d(\R)$ be a full-dimensional convex polytope.  Then $P$ is a positive geometry with canonical form
$$
\Omega(P) = \Vol((P- \x)^\vee) dx_1 dx_2 \cdots dx_d.
$$
\end{theorem}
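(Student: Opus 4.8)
The plan is to verify directly that the rational $d$-form
$$
\Omega_P \;:=\; \Vol\bigl((P-\x)^\vee\bigr)\,dx_1\cdots dx_d
$$
satisfies the recursive axioms of \cref{def:PG}; uniqueness then comes for free, since any two rational $d$-forms on $\P^d$ whose polar divisor lies in the (finite) union of the facet hyperplanes of $P$ and which have equal residues along each of them differ by a global regular $d$-form on $\P^d$, and $H^0(\P^d,\omega_{\P^d})=0$. So the content is \emph{existence}, which I would prove by induction on $d=\dim P$. The base case $d=0$ is the normalization in \cref{def:PG}(1): a point $P$ has $(P-\x)^\vee$ a point of volume $1$, matching $\Omega(P)=\pm1$.

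First I would record the standard rational formula for $R_P(\x):=\Vol((P-\x)^\vee)$. Write $P=\{\x:\langle a_i,\x\rangle\le b_i,\ i=1,\dots,m\}$ with facets $F_i=P\cap H_i$, $H_i=\{\langle a_i,\x\rangle=b_i\}$. For $\x\in\Int(P)$, the polar dual $(P-\x)^\vee$ is a bounded polytope containing $0$ in its interior; its vertex dual to $F_i$ is $a_i/(b_i-\langle a_i,\x\rangle)$, and its facets are dual to the vertices $v$ of $P$. Coning $(P-\x)^\vee$ from the origin over its facets and summing Euclidean volumes gives, for simple $P$,
$$
R_P(\x)\;=\;\frac{1}{d!}\sum_{v\in\mathrm{Vert}(P)}\frac{\bigl|\det(a_i:i\in I_v)\bigr|}{\prod_{i\in I_v}\bigl(b_i-\langle a_i,\x\rangle\bigr)},
$$
where $I_v$ is the set of $d$ facets through $v$ (the non-simple case follows by triangulating each facet of $(P-\x)^\vee$, or by perturbing $P$); equivalently this is Brion's decomposition $[P]=\sum_v[\text{tangent cone at }v]$ pushed through the dual-volume valuation. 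This already exhibits $R_P$ as rational in $\x$, positive and regular on $\Int(P)$ hence nonzero, with at worst \emph{simple} poles, occurring only along the $H_i$.

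The inductive engine is the residue identity $\Res_{H_i}\Omega_P=\Omega(C_i,C_{i,\ge0})$, the right side being the canonical form of the $(d-1)$-polytope $F_i\subset H_i$, available by the inductive hypothesis. Geometrically, as $\x$ approaches a generic point of $F_i$ within a complex neighbourhood, $(P-\x)^\vee$ degenerates by stretching to infinity along $a_i$ at rate $1/(b_i-\langle a_i,\x\rangle)$, with transverse cross-section converging to $(F_i-\x)^\vee$ computed inside $H_i$; extracting the $1/(b_i-\langle a_i,\x\rangle)$ coefficient is exactly the residue, and the volume normalization that appears matches the way $dx_1\cdots dx_d$ factors as $d\ell_i$ wedge the induced volume form on $H_i$. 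Concretely, in the vertex-sum formula only terms with $v\in F_i$ contribute a pole along $H_i$; taking the residue deletes the factor $(b_i-\langle a_i,\x\rangle)$, and the remaining sum over the vertices of $F_i$ — whose ``facets within $H_i$'' are the ridges $F_i\cap F_j$ of $P$ — is the $(d-1)$-dimensional instance of the same formula, i.e.\ $R_{F_i}$ times the volume form on $H_i$, with sign given by the induced orientation. With this, $\Omega_P$ has simple poles exactly along the boundary components $C_i=H_i$, $\Res_{C_i}\Omega_P=\Omega(C_i,C_{i,\ge0})$, so $(\P^d,P)$ is a positive geometry with $\Omega(P)=\Omega_P$, completing the induction.

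The step I expect to be the main obstacle is verifying that $\Omega_P$ has \emph{no pole along the hyperplane at infinity} — exactly what \cref{def:PG}(2) demands and what makes the uniqueness argument go through. Each summand above has only $d$ linear factors downstairs against the order-$(d+1)$ pole of $dx_1\cdots dx_d$ at infinity, so individually contributes a simple pole there, and one must show these cancel. Clearing denominators, this is the polynomial identity that the top-degree part of $\sum_v|\det(a_i:i\in I_v)|\prod_{j\notin I_v}(b_j-\langle a_j,\x\rangle)$ vanishes, i.e.\ $\sum_v|\det(a_i:i\in I_v)|\prod_{j\notin I_v}\langle a_j,\x\rangle=0$; this is ``Brion at infinity'', reflecting that the recession cones of the tangent cones at the vertices of the \emph{bounded} polytope $P$ sum to the class of $\{0\}$, to which the dual-volume valuation assigns $0$. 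A cleaner route for a survey bypasses the explicit formula: both $P\mapsto R_P$ and $P\mapsto\Omega(P)$ are valuations on polytopes (Lawrence--Filliman for the former; for the latter, gluing two polytopes along a facet cancels the spurious residue), and they agree on simplices, where $\Omega(\Delta)=c\,\ell_0^{-1}\cdots\ell_d^{-1}\,dx_1\cdots dx_d$ is the $\P^d$-analogue of \eqref{eq:int} — manifestly regular at infinity and equal to $R_\Delta\,dx_1\cdots dx_d$ — so a simplicial subdivision of $P$ gives the theorem, the internal poles cancelling and no new pole at infinity being created.
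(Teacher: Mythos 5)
The paper states this theorem without proof (it is a survey; the result is taken from the cited works \cite{ABL,GLX}), so I will assess your argument on its own terms and against the standard proofs in those references.

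Your proof is essentially correct, and you correctly diagnose the crux. The uniqueness reduction to $H^0(\P^d,\omega_{\P^d})=0$ is sound, granting that both candidate forms have only simple poles and poles only along facet hyperplanes (which the positive-geometry axioms force). The Lawrence--Filliman vertex formula for $R_P(\x)=\Vol((P-\x)^\vee)$ is the right explicit rational expression for simple $P$; the residue computation along $H_i$, deleting the factor $b_i-\langle a_i,\x\rangle$ and collapsing the remaining sum to the vertex formula for the facet $F_i$, matches what one gets from \eqref{eq:simple}, though the sign bookkeeping (absolute determinants versus the $\pm$ in \eqref{eq:simple}, and the factorization $dx_1\cdots dx_d = d\ell_i\wedge(\text{volume form on }H_i)/(\text{normalization})$) deserves a careful check for the induction to close with the correct orientations. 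You are also right that the real content is the absence of a pole at infinity, which each vertex summand individually fails to satisfy; your polynomial identity $\sum_v|\det(a_i:i\in I_v)|\prod_{j\notin I_v}\langle a_j,\x\rangle=0$ is exactly what must vanish (degree count: numerator degree $m-d$, denominator degree $m$, and $dx_1\cdots dx_d$ has an order-$(d+1)$ pole at infinity), and it does hold; but the ``Brion at infinity'' heuristic you offer for it is loose, and I would not call it a proof as written. Your second route — both $R_P\,dx_1\cdots dx_d$ and the would-be canonical form are valuations under polyhedral subdivision, agree on simplices (where regularity at infinity is a one-line check), and internal residues cancel under gluing — is the argument that actually appears in the cited literature \cite{ABL}, and it cleanly sidesteps the identity. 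So: correct overall, with the direct-formula route carrying a sign-verification burden and an unproved identity at infinity that you yourself flag, and the valuation route being the standard, complete proof.

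One small point: in dimension $0$ the polar of the single point of $\R^0$ is all of $\R^0=\{0\}$, whose $0$-dimensional volume is $1$ by convention, so the base case is fine, but the phrasing ``$(P-\x)^\vee$ a point'' could be tightened.
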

Here, $P-\x$ is a translation of $P$ and $(P-\x)^\vee$ denotes the polar polytope.  We call the function $\Vol((P- \x)^\vee)$ the \emph{dual volume function}; see \cite{GLX} for a discussion in the context of (usual) convex geometry.  For $\x$ in the interior of $P$, this is well-defined and analytically continues to a rational function.  It is immediate that when $\x$ approaches the boundary of $P$ that this function acquires a singularity.  This is a direct analogue of the locality of scattering amplitudes.  As an example, the interval $[a,b] \subset \R$ has canonical form
$$
\Omega([a,b]) = \Vol\left(\left[\frac{1}{a-x},\frac{1}{b-x}\right]\right) dx =  \left( \frac{1}{x-a}  -\frac{1}{x-b}\right) dx,$$
agreeing with \eqref{eq:int}.

The positive geometry of polytopes appears in a number of places in physics, even beyond amplitudes, such as in cosmology.  In the following, we connect to the $\phi^3$-theory of \cref{ex:phi3}.

We consider the biadjoint cubic scalar theory.  This theory has only one type of particle, and cubic interactions, so that the Feynman diagrams in the theory are graphs where all internal vertices have degree 3.  The color-ordered amplitudes of this theory are the same as for the $\phi^3$-theory of \cref{ex:phi3} except that we are summing over planar diagrams.  We denote the set of cubic planar trees by $\T^{(3)}_n$.  Denoting the cubic planar scalar amplitude by $A_n^{\phi^3}$, we have
\begin{equation}\label{eq:planarphi3}
 A_{n}^{\phi^3} = \sum_{T \in \T^{(3)}_{n}} \prod_{e \in I(T)} \frac{1}{X_e},
\end{equation}
where $X_e$ is defined in \eqref{eq:Xe}.  The function $A_n^{\phi^3}$ should be viewed as a rational function on kinematic space $K_n$.  For example, for $n = 5$, we have
\begin{equation}\label{eq:5phi3}
A_n^{\phi^3} =  \frac{1}{s_{12}s_{45}} +  \frac{1}{s_{12}s_{34}} +  \frac{1}{s_{34}s_{15}}+  \frac{1}{s_{23}s_{15}} +  \frac{1}{s_{23}s_{45}}.
\end{equation}

Let $\Ass_{n-3}$ denote the $(n-3)$-dimensional associahedron.  This is a simple convex polytope with face poset given by the poset of planar trees $\T_n$ with $n$ leaves.  These are planarly embedded trees $T$ with $n$ boundary leaves labeled $1,2,\ldots,n$ in cyclic order.  The covering relation of $\T_n$ is as follows: for two trees $T,T'$, we have $T \lessdot T'$ if $T'$ is obtained from by contracting an internal edge.  The minimal elements of the poset $\T_n$ consist of the cubic planar trees.

For a general simple polytope $P \subset \R^d$, the canonical form $\Omega(P)$ has an expression as the sum over vertices
\begin{equation}
\label{eq:simple}
\Omega(P) = \left(\sum_{V(P)} \pm \prod_F \frac{1}{F(x)} \right) dx_1 \cdots dx_d
\end{equation}
where the product is over the $d$ facets incident to the vertex $v$, and $F(x)$ is the linear function corresponding to the facet $F$, scaled appropriately; see \cite{GLX}.  Since the vertex set of $\Ass_{n-3}$ can be identified with $\T^{(3)}_n$, the canonical form of the associahedron has an expression as a summation over cubic planar trees.  In \cite{ABHY}, a particular realization of $\Ass_{n-3}$ inside kinematic space $K_n$ is given so that the canonical form 
$\Omega(\Ass_{n-3})$ is equal to the rational function $A_n^{\phi^3}$ up to a standard top-form.  Thus the associahedron is the amplituhedron for biadjoint cubic scalar theory.    

We state a version of this result using the \emph{dual mixed volume} function in our joint work \cite{GLX} with Gao and Xue.

\begin{definition}
For subsets $P_1,\ldots,P_r$ in $\R^d$, the dual mixed volume is the function
$$
f(\lambda_1,\ldots,\lambda_r):= \Vol((\lambda_1 P_1 + \lambda_2 P_2 + \cdots +\lambda_r P_r)^\vee)
$$
where $P^\vee$ denotes the polar body and $P+Q$ denotes Minkowski sum.  
\end{definition}
When the $P_i$ are polytopes, this is a rational function in the $\lambda_i$ \cite{GLX}.  We choose the polytopes to be the simplices 
$$
\Delta_{[a,b]} := \conv(\epsilon_i \mid a \leq i \leq b) \subset \R^n,
$$
and use as our variables the Mandelstam variables $s_{a,b}$.
For positive $s_{a,b}$, the Minkowski sum $\Sigma_{1 \leq a \leq b \leq n -2} s_{a,b+1} \Delta_{[a,b]}$ is the \emph{Loday realization} of the associahedron $\Ass_{n-3}$.  We consider the dual mixed volume associated to this Minkowski sum; see \cref{fig:Ass}.

\begin{theorem}[\cite{GLX}]\label{thm:GLX}
The dual mixed volume of the associahedron $\Sigma_{1 \leq a \leq b \leq n-2} s_{a,b+1} \Delta_{[a,b]}$ is, up to sign, equal to the planar $\phi^3$-amplitude.
\end{theorem}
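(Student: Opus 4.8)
The plan is to combine the dual-volume formula \cref{thm:dual} with the vertex expansion \eqref{eq:simple} of the canonical form of a simple polytope, and then match the result with the tree sum \eqref{eq:planarphi3} term by term. Write $P = P(s) := \sum_{1 \le a \le b \le n-2} s_{a,b+1}\,\Delta_{[a,b]}$ for the Loday realization. For $s$ in the relevant open cone this is a full-dimensional simple polytope inside its $(n-3)$-dimensional affine span, with the combinatorial type of $\Ass_{n-3}$; after translating to place the origin in its interior (as in the setup of \cite{GLX}) and choosing linear coordinates $\x = (x_1,\dots,x_{n-3})$ on that affine span, \cref{thm:dual} gives $\Omega(P) = \Vol((P-\x)^\vee)\,dx_1\cdots dx_{n-3}$, while \eqref{eq:simple} gives $\Omega(P) = \bigl(\sum_{v \in V(P)} \pm \prod_{F \ni v} \frac{1}{F(\x)}\bigr)\,dx_1\cdots dx_{n-3}$. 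Under the bijection $V(\Ass_{n-3}) \cong \T^{(3)}_n$, in which the facets incident to the vertex $v = T$ are exactly the internal edges $e \in I(T)$, this last sum is indexed by cubic planar trees. So the whole theorem reduces to two claims: (i) the scaled facet form $F_e$ attached to the facet indexed by an internal edge $e$ is the planar Mandelstam variable $X_e$ of \eqref{eq:Xe}, and (ii) the signs and scalings in \eqref{eq:simple} are all trivial.

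Claim (ii) is where I would start, and it follows from the fact that the Loday realization of the associahedron is a smooth (Delzant) polytope: at each vertex $v = T$ the outer facet normals $\{u_e : e \in I(T)\}$ form a lattice basis, so every determinant entering \eqref{eq:simple} is $\pm 1$, the signs can be absorbed into a single choice of orientation, and each $F_e$ may be taken to be the honest supporting affine function $X_e - \langle u_e, \x\rangle$ that vanishes on the facet. This reduces claim (i) to the statement that the support number $h_{P(s)}(u_e) = \sum_{1\le a\le b\le n-2} s_{a,b+1}\, h_{\Delta_{[a,b]}}(u_e)$ equals $X_e$ as a linear function of the $s$-variables. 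This support-number identity is the heart of the argument: one computes $h_{\Delta_{[a,b]}}(u_e)$ (it is $1$ when the interval $[a,b+1]$ lies on one side of the diagonal $e$ of the $n$-gon and $0$ otherwise, or the analogous statement in the dictionary one prefers), sums over $(a,b)$, and recognizes the result as $(\sum_{i \in S}\p_i)^2 = X_e$ for $S$ the block of particles cut off by $e$. It is exactly the content of Loday's realization being ``the right one'' (equivalently, of the ABHY realization referenced above). I expect this combinatorial bookkeeping --- lining up intervals $[a,b]$, diagonals of the $n$-gon / internal edges of trees, and the momentum blocks in \eqref{eq:fullphi3} --- to be the main obstacle; everything else is formal.

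Granting (i) and (ii), \eqref{eq:simple} becomes $\Omega(P) = \bigl(\sum_{T \in \T^{(3)}_n}\prod_{e \in I(T)} \frac{1}{X_e - \langle u_e,\x\rangle}\bigr)\,dx_1\cdots dx_{n-3}$, and comparing with \cref{thm:dual} gives $\Vol((P-\x)^\vee) = \pm \sum_{T \in \T^{(3)}_n}\prod_{e\in I(T)} \frac{1}{X_e}$, i.e. the dual volume of the Loday associahedron is, up to sign, the amplitude \eqref{eq:planarphi3}. Finally I would translate this into the ``dual mixed volume in the $s$-variables'' form demanded by the statement: on the kinematic space $K_n$ cut out by \eqref{eq:sij}, the planar variables $X_e$ are affine coordinates on the affine span of $P$, so the basepoint $\x$ and the Mandelstam data $(s_{ab})$ are two names for the same point, and the rational functions $\x \mapsto \Vol((P-\x)^\vee)$ and $s \mapsto \Vol((\sum_{a\le b}s_{a,b+1}\Delta_{[a,b]})^\vee)$ coincide. (One can shortcut the whole first half by instead invoking the already-cited fact that $\Omega(\Ass_{n-3})$ equals $A_n^{\phi^3}$ up to a standard top-form, together with the known coincidence of the ABHY and Loday realizations, and then only performing this last translation.)
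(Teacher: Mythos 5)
Your plan---apply \cref{thm:dual} together with the vertex expansion \eqref{eq:simple}, use the Delzant (smooth) property of the Loday realization to trivialize the determinants and scalings, and then compute the support numbers to show $h_{P(s)}(u_e)=X_e$---is the natural argument and is essentially the one in \cite{GLX}. You have isolated the right "heart of the proof": the support-function identity. Indeed, for the primitive outer normal $u_e$ of the facet indexed by a diagonal $e$ (equivalently an interval $[a',b']$), one has $h_{\Delta_{[a,b]}}(u_e)\in\{0,-1\}$ according to whether $[a,b]\subseteq[a',b']$, so $h_{P(s)}(u_e)=-\sum_{a'\le a\le b\le b'} s_{a,b+1}=-\sum_{i<j\in S}s_{ij}$ with $S$ the block of particles cut off by $e$, and masslessness gives $\sum_{i<j\in S}s_{ij}=(\sum_{i\in S}\p_i)^2=X_e$. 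The Delzant claim is also correct (the Loday normal fan is unimodular), and it is the reason every vertex contributes a pure $\prod_e 1/h_e$ with a uniform sign; so the result holds up to a single overall sign, as stated.

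There are, however, two flaws in your final translation step. First, the display $\Vol((P-\x)^\vee)=\pm\sum_T\prod_{e\in I(T)}1/X_e$ has dropped the $\x$-dependence; with $X_e$ denoting the facet height of $P(s)$ itself, the correct statement is $\Vol((P(s)-\x)^\vee)=\pm\sum_T\prod_e 1/(X_e-\langle u_e,\x\rangle)$, and the theorem is the specialization $\x=0$. Second, and more substantively, the claim that the basepoint $\x$ and the Mandelstam data $(s_{ab})$ are "two names for the same point" is false on dimensional grounds ($\x\in\R^{n-3}$, while the independent $s_{ab}$ form a larger set), and the two displayed rational functions cannot "coincide" since they live on different domains. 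What is true---and is all that is needed---is that both are obtained by composing the \emph{same} universal rational function $R(h_1,\ldots,h_m)=\sum_v\pm\prod_F 1/h_F$ of the facet heights with the respective linear maps $\x\mapsto h(\x)$ and $s\mapsto h(s)$; once you know $h_e(s)=X_e(s)$, you can apply $R$ directly to $Q(s)=\sum s_{a,b+1}\Delta_{[a,b]}$ and never introduce a translated polytope $P-\x$ at all. Replacing the last paragraph with this observation closes the gap.
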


\begin{figure}
\begin{center}
$$
\begin{tikzpicture}

\draw[thick,color = blue] (0,0) -- (3,0) -- (3,1.5) -- (1.5,3) -- (0,3) -- (0,0);
\draw[fill,color=blue] (0,0) circle (0.5mm);
\draw[fill,color=blue] (3,0) circle (0.5mm);
\draw[fill,color=blue] (3,1.5) circle (0.5mm);
\draw[fill,color=blue] (1.5,3) circle (0.5mm);
\draw[fill,color=blue] (0,3) circle (0.5mm);
\begin{scope}[shift={(-0.5,3.4)},scale=0.3]
\coordinate (A) at (0,0);
\coordinate (B) at (1,0);
\coordinate (C) at (1.5,-0.866);
\node[inner sep = 0] (L1) at (-0.5,-0.866) {$1$};
\node[inner sep = 0] (L2) at (-0.5,+0.866) {$2$};
\node[inner sep = 0] (L3) at (1.5,+0.866) {$3$};
\node[inner sep = 0] (L4) at (2.5,-0.866) {$4$};
\node[inner sep = 0] (L5) at (1,-2*0.866) {$5$};
\draw[thick] (A) --(B)--(C)--(L4);
\draw[thick] (L1)--(A)--(L2);
\draw[thick] (L3)--(B);
\draw[thick] (C)--(L5);
\end{scope}

\begin{scope}[shift={(-0.45,-0.4)},scale = 0.3]
\coordinate (A) at (0,0);
\coordinate (B) at (1,0);
\coordinate (C) at (1.5,+0.866);
\node[inner sep =0] (L1) at (-0.5,-0.866) {$1$};
\node[inner sep =0](L2) at (-0.5,+0.866) {$2$};
\node[inner sep =0](L3) at (1,1.732) {$3$};
\node[inner sep =0](L4) at (2.5,+0.866) {$4$};
\node[inner sep =0] (L5) at (1.5,-0.866) {$5$};
\draw[thick] (A) --(B)--(C)--(L4);
\draw[thick] (L1)--(A)--(L2);
\draw[thick] (L3)--(C);
\draw[thick] (B)--(L5);
\end{scope}

\begin{scope}[shift={(3.3,-0.2)}, scale = 0.3]
\coordinate (A) at (0,0);
\coordinate (B) at (1,0);
\coordinate (C) at (1.5,-0.866);
\node[inner sep =0] (L5) at (-0.5,-0.866) {$5$};
\node[inner sep =0] (L1) at (-0.5,+0.866) {$1$};
\node[inner sep =0] (L2) at (1.5,+0.866) {$2$};
\node[inner sep =0] (L3) at (2.5,-0.866) {$3$};
\node[inner sep =0] (L4) at (1,-2*0.866) {$4$};
\draw[thick] (A) --(B)--(C)--(L3);
\draw[thick] (L5)--(A)--(L1);
\draw[thick] (L2)--(B);
\draw[thick] (C)--(L4);
\end{scope}

\begin{scope}[shift={(3.3,1.5)},scale = 0.3]
\coordinate (A) at (0,0);
\coordinate (B) at (1,0);
\coordinate (C) at (1.5,+0.866);
\node[inner sep =0] (L5) at (-0.5,-0.866) {$5$};
\node[inner sep =0] (L1) at (-0.5,+0.866) {$1$};
\node[inner sep =0] (L2) at (1,1.732) {$2$};
\node[inner sep =0] (L3) at (2.5,+0.866) {$3$};
\node[inner sep =0] (L4) at (1.5,-0.866) {$4$};
\draw[thick] (A) --(B)--(C)--(L3);
\draw[thick] (L5)--(A)--(L1);
\draw[thick] (L2)--(C);
\draw[thick] (B)--(L4);
\end{scope}

\begin{scope}[shift={(1.5,3.45)},scale = 0.3]
\coordinate (A) at (0,0);
\coordinate (B) at (1,0);
\coordinate (C) at (1.5,-0.866);
\node[inner sep =0] (L4) at (-0.5,-0.866) {$4$};
\node[inner sep =0] (L5) at (-0.5,+0.866) {$5$};
\node[inner sep =0] (L1) at (1.5,+0.866) {$1$};
\node[inner sep =0] (L2) at (2.5,-0.866) {$2$};
\node[inner sep =0] (L3) at (1,-2*0.866) {$3$};
\draw[thick] (A) --(B)--(C)--(L2);
\draw[thick] (L5)--(A)--(L4);
\draw[thick] (L1)--(B);
\draw[thick] (C)--(L3);
\end{scope}

\begin{scope}[shift={(8.4 ,0)}]
\draw[thick,color = blue] (0,0)--(1.5,0)--(0,1.5)--(0,0);
\begin{scope}[shift={(0,2.5)}]
\draw[thick,color = blue] (0,0)--(1.5,0);
\node at (-0.15,0){$\epsilon_2$};
\node at (1.65,0){$\epsilon_1$};
\end{scope}
\begin{scope}[shift={(-1,0)}]
\draw[thick,color = blue] (0,0)--(0,1.5);
\node at (0,-0.15){$\epsilon_2$};
\node at (0,1.65){$\epsilon_3$};
\end{scope}
\node at (-0.5,0.75) {$+$};
\node at (0.75,2) {$+$};
\node at (2.3,1){$=$};
\node at (-0.15,-0.15){$\epsilon_2$};
\node at (1.65,-0.15){$\epsilon_1$};
\node at (-0.15,1.65){$\epsilon_3$};
\end{scope}

\begin{scope}[shift={(11.5,0)}]
\draw[thick,color = blue] (0,0) -- (3,0) -- (3,1.5) -- (1.5,3) -- (0,3) -- (0,0);
\draw[fill,color=blue] (0,0) circle (0.5mm);
\draw[fill,color=blue] (3,0) circle (0.5mm);
\draw[fill,color=blue] (3,1.5) circle (0.5mm);
\draw[fill,color=blue] (1.5,3) circle (0.5mm);
\draw[fill,color=blue] (0,3) circle (0.5mm);
\node at (-0.1,-0.2) {\mini $(s_{12},s_{34})$};
\node at (-0.1,3.2) {\mini $(s_{12},s_{14}+s_{24}+s_{34})$};
\node at (3.2,2.8) {\mini $(s_{12}+s_{13},s_{14}+s_{24}+s_{34})$};
\node at (3.2,1.3) {\mini $(s_{12}+s_{13}+s_{14},s_{24}+s_{34})$};
\node at (3,-0.2) {\mini $(s_{12}+s_{13}+s_{14},s_{34})$};
\end{scope}

\end{tikzpicture}
$$
\end{center}
\caption{Left: The $2$-dimensional associahedron $\Ass_2$ is a pentagon.  The vertices have been labeled  by the five cubic planar trees with five leaves.  Right: The associahedron in \cref{thm:GLX} is a Minkowski sum (we do not depict the $0$-simplices which give a translation of the polytope).  We have projected the picture to the plane by omitting the second coordinate (corresponding to the basis vector $\epsilon_2$).}
\label{fig:Ass}
\end{figure}
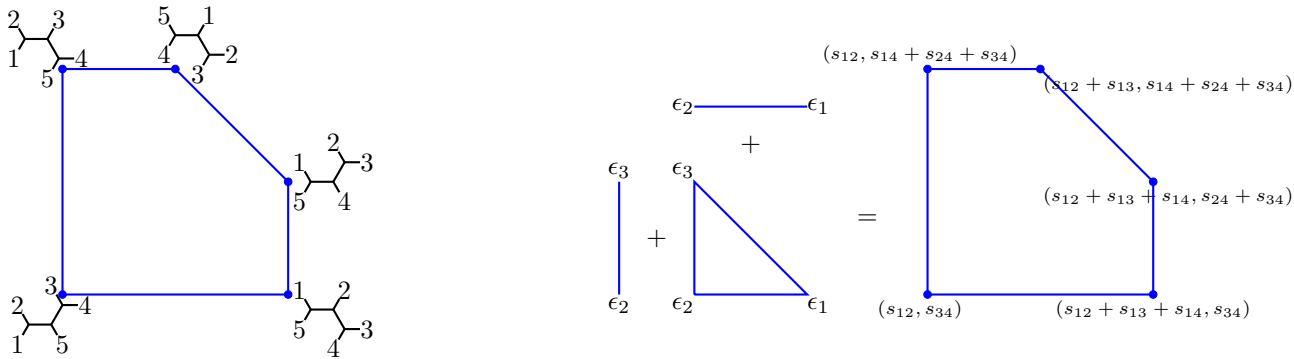

This story extends to the setting of generalized associahedra \cite{BDMTY,AHLcluster} appearing in the theory of cluster algebras.

\section{Moduli geometry.}\label{sec:moduli}
From the quantum field theory perspective, the cubic scalar theory is a toy model.  However, it arises as the low energy limit of string theory.

Whereas $n$ particle quantum field theory amplitudes are perturbative sums over Feynman diagrams with $n$ leaves and Betti number $L = 0,1,2,\ldots$, string theory amplitudes are integrals over the moduli space $\M_{g,n}$ of $n$-pointed genus $g = 0,1,2,\ldots$ curves.  The tree-level string amplitude is an integral over the moduli space $\M_{0,n}$ of $n$-points on $\P^1$.   This is a smooth affine algebraic variety of dimension $n - \dim \PGL(2) = n - 3$.

Roughly speaking, the \emph{closed} string amplitude is an integral over $\M_{g,n}(\C)$, while the \emph{open} string amplitude is an integral over $\M_{g,n}(\R)$.  \Cref{fig:string} depicts how closed strings (the circle $S^1$) or open strings (the interval $[0,1]$) scatter.  When the string length goes to 0, strings turn into particles, string theory turns into quantum field theory, and the picture of strings interacting turn into Feynman diagrams.
\begin{figure}
\begin{center}
$$
\begin{tikzpicture}
\draw plot [smooth,tension = 1] coordinates {(0,0.2)(2,-0.3)(4,0.2)};
\draw plot [smooth,tension = 1] coordinates {(0,-0.2)(1.1,-1)(0,-2)};
\draw plot [smooth,tension = 1] coordinates {(0,-2.4)(2,-1.9)(4.5,-3.4)};
\draw plot [smooth,tension = 1] coordinates {(4,-0.2)(2.7,-1)(4.5,-3)};
\draw (0,0) ellipse (0.1 and 0.2);
\draw (0,-2.2) ellipse (0.1 and 0.2);
\draw (4,0) ellipse (0.1 and 0.2);
\draw (4.5,-3.2) ellipse (0.1 and 0.2);
\begin{scope}[shift={(0.2,0.2)}]
\draw plot [smooth,tension = 1] coordinates {(1,-1)(1.15,-1.06)(1.4,-1.1)(1.65,-1.06)(1.8,-1)};
\draw plot [smooth,tension = 1] coordinates {(1.15,-1.06)(1.4,-1)(1.65,-1.06)};
\end{scope}
\begin{scope}[shift={(0.7,-0.2)}]
\draw plot [smooth,tension = 1] coordinates {(1,-1)(1.15,-1.06)(1.4,-1.1)(1.65,-1.06)(1.8,-1)};
\draw plot [smooth,tension = 1] coordinates {(1.15,-1.06)(1.4,-1)(1.65,-1.06)};
\end{scope}

\begin{scope}[shift={(6,0)}]
\draw plot [smooth,tension = 1] coordinates {(0,0.2)(2,-0.3)(4,0.2)};
\draw plot [smooth,tension = 1] coordinates {(0,0)(1.1,-1)(0,-2)};
\draw plot [smooth,tension = 1] coordinates {(0,-2.2)(1.4,-1.9)(2.2,-3)};
\draw plot [smooth,tension = 1] coordinates {(2.4,-3)(2.8,-1.4)(4,-1)};
\draw plot [smooth,tension = 1] coordinates {(4,0)(2.6,-0.65)(4,-0.8)};
\end{scope}

\begin{scope}[shift={(12,0)}]
\draw (0,0) -- (1.2,-0.5);
\draw (4,0) -- (1.2,-0.5);
\draw (0,-2) -- (1.6,-1.5);
\draw (2.5,-3) -- (1.6, -1.5);
\draw (4,-1) -- (2,-1);
\draw (1.6,-1.5) -- (2,-1);
\draw (1.2,-0.5)--(2,-1);
\end{scope}
\end{tikzpicture}
$$
\end{center}
\caption{Left: the scattering of closed strings ($S^1$) is represented by a point on $\M_{2,4}(\C)$, where marked points, or punctures, are drawn as cusps at infinity.  Center: the scattering of open strings ($[0,1]$) is represented by a point on $\M_{0,5}(\R)$.  Right: as the string length goes to 0, we obtain a Feynman diagram.}
\label{fig:string}
\end{figure}
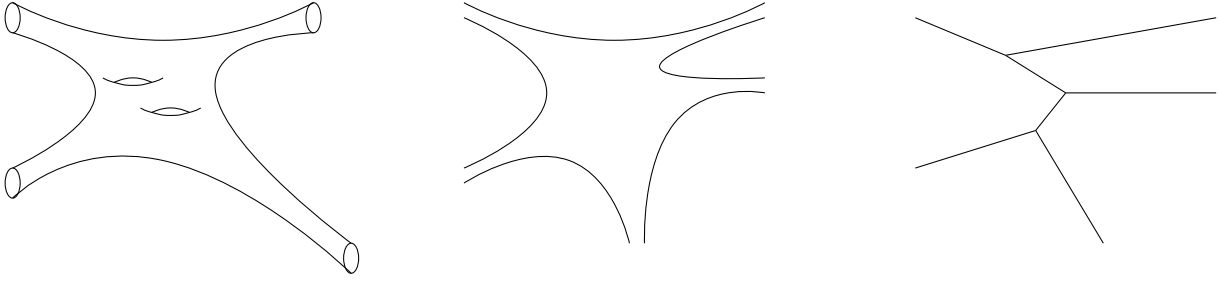
We focus on the open string amplitude, which can be expressed as an integral over $(\M_{0,n})_{>0}$, the connected component of $\M_{0,n}(\R)$ where the $n$ marked points are in the usual cyclic order.  The closure $(\M_{0,n})_{\geq 0}$ of $(\M_{0,n})_{>0}$ in $\bM_{0,n}(\R)$ is a positive geometry \cite{AHLcluster}, with canonical form $\Omega((\M_{0,n})_{>0})$.  This meromorphic top-form is known as the \emph{Parke-Taylor form}, and is often written as follows:
$$
\Omega(\M_{0,n})_{>0}) = \frac{1}{(z_1-z_2)(z_2-z_3) \cdots (z_n-z_1)} \frac{dz_1 dz_2 \cdots dz_n}{\PGL(2)},
$$
where $z_1,z_2,\ldots,z_n$ denotes the $n$ points on $\P^1$; see \cite{LamModuli} for more details.  The \emph{$n$-point tree-level open string amplitude} is the integral
\begin{equation}\label{eq:string}\
I_n(s_{ij}) := \int_{(\M_{0,n})_{>0}} \prod_{i < j} (z_i - z_j)^{\alpha' s_{ij}} \Omega((\M_{0,n})_{>0}),
\end{equation}
viewed as a function of the Mandelstam variables $s_{ij}$, or as a rational function on kinematic space $K_n$.  The parameter $\alpha'$ is the square of the string length.  For $n = 4$, this integral gives the Euler beta function, and we have
$$
I_4|_{s = \alpha' s_{12}, t = \alpha' s_{23}} = B(s,t) = \frac{\Gamma(s) \Gamma(t)}{\Gamma(s+t)}.
$$ 
For higher $n$, the integral \eqref{eq:string} can be analytically continued to a meromorphic function.

The string amplitude exhibits many features of combinatorial geometry.  The closure $(\M_{0,n})_{\geq 0}$ is homeomorphic to the associahedron as a topological stratified space, though it is not linearly embedded as a convex polytope.  This is the context in which Stasheff studied the associahedron.  In particular, $(\M_{0,n})_{>0}$ is homeomorphic to an open ball and one can find parameters $y_1,y_2,\ldots,y_{n-3} \in \R_{>0}$ to parametrize $(\M_{0,n})_{>0}$ so that all the factors $(z_i-z_j)$ in \eqref{eq:string} are polynomials in $y_i$ with positive coefficients.  This \emph{positive parametrization} comes from the coefficient variables of the cluster structure of the Grassmannian $\Gr(2,n)$, and indeed $\M_{0,n}$ can be realized as a cluster configuration space \cite{AHLcluster,AHLT}. 

In the limit of vanishing string length, string theory reduces to quantum field theory:
$$
\lim_{\alpha' \to 0} (\alpha')^{n-3} I_n(s_{ij}) = A_n^{\phi^3}(s_{ij}).
$$
For example, when $n = 4$, the beta function $B(s,t)$ has two simple poles (from $\Gamma(s), \Gamma(t)$) and one zero (from $\Gamma(s+t)$) that pass through the origin, and we have $$\lim_{\alpha' \to 0} \alpha' B(\alpha' s_{12}, \alpha' s_{23}) = \frac{s_{12} + s_{23}}{s_{12} s_{23}} = \frac{1}{s_{12}} + \frac{1}{s_{23}},$$
the two terms corresponding to the two cubic planar trees with four leaves.

How can the combinatorics of the limiting rational function be read off from the integral?
With Arkani-Hamed and He \cite{AHLstringy}, we define a general class of \emph{stringy integrals} with integrands given by products of positive Laurent polynomials and establish a general result of this type:
\begin{equation}\label{eq:stringy}
\lim_{\alpha' \to 0} \left( \int_{\R^d} \prod_{i=1}^d y_i^{\alpha' x_i} \prod_j p_j(\y)^{- \alpha' c_i} \frac{dy_1}{y_1} \cdots \frac{d y_d}{y_d} \right) dx_1 \cdots dx_d = \text{ canonical form of } P,
\end{equation}
where the polytope $P$ is the Minkowski sum of the Newton polytopes of the polynomials $p_j(\y)$ appearing as factors in the integrand of the left hand side (the factors $(z_i-z_j)$ in \eqref{eq:string}).  Thus the positive geometry of the polytope $P$ controls behavior of the stringy integral near the origin.

The limit \eqref{eq:stringy} has an interpretation that is purely geometric.  Saddle-point analysis asks one to consider the behavior of the integral in the neighborhood of the critical points of the integrand.  In the context of the string amplitude \eqref{eq:string}, the critical point equations are the \emph{scattering equations} of Cachazo-He-Yuan \cite{CHYarbitrary}, originally inspired by the twistor string theory of Witten:
$$
d \log \prod_{i < j} (z_i - z_j)^{s_{ij}} = \sum_{i < j} s_{ij} d \log(z_i - z_j) = \sum_i \sum_{j \neq i} \frac{s_{ij}}{z_i - z_j} dz_i = 0
$$
or 
\begin{equation}\label{eq:scateq}
\sum_{j \neq i} \frac{s_{ij}}{z_i - z_j} = 0, \qquad \text{for } i = 1,2,\ldots,n.
\end{equation}
The critical point equations produce the scattering correspondence:
\begin{equation}\label{eq:scatcorr}
\begin{tikzcd}
&\I \arrow{rd}{q} \arrow[swap]{ld}{p} & \\%
\M_{0,n} && K_n
\end{tikzcd}
\end{equation}
where $\I \subset \M_{0,n} \times K_n$ is defined by \eqref{eq:scateq}.  The map $\I \to K_n$ is a map of degree $(n-3)!$.  By restricting to linear subspaces of $K_n$ of dimension $n-3$, the projection $p: \I \to K_n$ can be viewed as a rational map from $\M_{0,n}$ to $K_n$.  Confirming a conjecture in \cite{ABHY}, it is shown in \cite{AHLstringy} that this map sends the ``worldsheet associahedron'' $(\M_{0,n})_{>0}$ onto the interior of the ``kinematic associahedron'' $\Int(\Ass_{n-3})$, and that the pushforward paradigm \cite{ABL} of canonical forms holds:
\begin{equation}\label{eq:pushforward}
p_*  \Omega((\M_{0,n})_{>0}) = \Omega(\Ass_{n-3}).
\end{equation}
This result generalizes to the context of stringy integrals as in \eqref{eq:stringy}; see  \cite{AHLstringy,LamModuli}.

\section{Hyperplane arrangements.}\label{sec:hyper}
The scattering correspondence diagram \eqref{eq:scatcorr} can be generalized to arbitrary very affine varieties (that is, closed subvarieties of an algebraic torus) $U$ replacing the space $\M_{0,n}$.  In algebraic statistics, it is called the likelihood correspondence \cite{HS}.  The kinematic space $K_n$ is replaced by the dual of the Lie algebra of the \emph{intrinsic torus} of $U$.  In this section, we consider this construction for hyperplane arrangements.  

Let $\A = \{H_e  \mid e \in E\}$ be a collection of real hyperplanes in $\P^d$.  We assume that $\A$ is an essential hyperplane arrangement (that is, the normal vectors to the hyperplanes are a spanning set).  We consider the complement $U = \P^d \setminus \A$, and the multi-valued function on $U$,
$$
\varphi := \prod_e f_e^{a_e}
$$
where $f_e$ is a linear form vanishing on $H_e$.  For the function $\varphi$ to be projectively well-defined, we assume that $\sum_e a_e = 0$.  The $1$-form
$$
\omega = d \log \varphi = \sum_{e} a_e \frac{df_e}{f_e}
$$
is an algebraic $d$-form on $U$.

\begin{definition}
The scattering equations for $U$ are the critical point equations $\omega = 0$.  
\end{definition}

We let $\Crit(\omega)$ denote the critical point locus.
The multi-valued function $f$ is known as the \emph{master function} by Varchenko \cite{Varbook}; in the setting of amplitudes, we call it the \emph{scattering potential}.  When $\a \in \Lambda_\C$ is generic the critical point equations consist of $|\chi(U)|$ points, where $\chi(U)$ denotes the Euler characteristic \cite{OT}.  

A chamber of $U$ is a connected component of $U(\R)$.  Each chamber is a polytope and thus a positive geometry.  In the case $U = \M_{0,n}$, the positive part $(\M_{0,n})_{>0}$ is such a chamber.  
\begin{definition}\label{def:CHY}
Let $\Omega_P = h(x_1,\ldots,x_d) dx_1 \ldots dx_d$ be the canonical form of a chamber $P$ in $U(\R)$.  Then the amplitude of $P$ is defined to be
$$
\A_P(\a):= \sum_{p \in \Crit(\omega)} h(p)^2 \det\left(\frac{\partial^2 \log \varphi}{\partial x_i \partial x_j}\right)_p^{-1}.
$$
\end{definition}
While \cref{def:CHY} applies only when the parameters $\a$ are generic, the formula produces a rational function in $\a$. 

We give two alternative descriptions of the rational function $\A_P(\a)$, starting with a combinatorial formula.  Let $L = L(\A)$ denote the lattice of flats of the arrangement $\A$, and for a flat $F$, we define $a_F:= \sum_{e \in F} a_e$.  For a chamber $P$ of $U(\R)$, we let $L(P) \subset L$ denote the subposet of flats that lie on the boundary of $P$.  These are exactly the subspaces spanned by the faces of $P$ (as a projective polytope).  We denote by $\Fl$ and $ \Fl(P)$ the set of maximal flags of flats in $L$ and $L(P)$ respectively.  For a maximal flag $F_\bullet = (\emptyset = F_0 \subset F_1 \subset \cdots \subset F_{d+1} = E)$, we define $a_{F_\bullet}:= \prod_{i=1}^d a_{F_d}$.

\begin{theorem}[\cite{LamMat}]\label{thm:AP}
We have 
\begin{equation}\label{eq:AP}
\A_P(\a) = \sum_{F_\bullet \in \Fl(P)} \frac{1}{a_{F_\bullet}}.
\end{equation}
\end{theorem}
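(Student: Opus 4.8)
The plan is to prove \eqref{eq:AP} by induction on $d=\dim P$, showing that both sides are rational functions of $\a$ with the same polar divisor and the same residues along each hyperplane in that divisor, and then closing with a homogeneity argument. The conceptual backbone is the pushforward paradigm of \cite{ABL,AHLstringy}: $\A_P(\a)$ is essentially the pushforward of the canonical form $\Omega_P$ under the scattering map attached to $\omega$, so it ought again to be a canonical form --- of the ``kinematic polytope'' carved out by the inequalities $a_F>0$ --- and the flag sum on the right is exactly the vertex expansion \eqref{eq:simple} of that canonical form. What I will actually carry out, however, is the residue recursion, because that is what makes the positive geometry structure of $\Omega_P$ interact cleanly with the critical-point sum.

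First I would record the standard analytic input. For $\a$ with $\sum_e a_e=0$ and all $a_F\neq 0$, the scattering potential is a Morse function on $U$ with exactly $|\chi(U)|$ nondegenerate critical points (Varchenko \cite{Varbook}, \cite{OT}), no critical point escapes to $\A$ on this locus, and the critical set varies algebraically with $\a$. Hence each summand of \cref{def:CHY} is regular there, the sum (being monodromy-invariant) is a rational function of $\a$, and its poles lie in $\bigcup_{F\in L}\{a_F=0\}$. A scaling check ($\Crit(t\omega)=\Crit(\omega)$ and $\partial^2\log\varphi \mapsto t\,\partial^2\log\varphi$ under $\a\mapsto t\a$) shows $\A_P(\a)$ is homogeneous of degree $-d$, as is the manifestly rational right-hand side, whose simple poles lie on $\{a_F=0:F\in L(P)\}$.

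The heart of the argument is a factorization of $\A_P$ along each hyperplane $\{a_F=0\}$ with $F\in L(P)$. Geometrically, as $a_F\to 0$ a cluster of critical points collides onto the flat $X_F=\bigcap_{e\in F}H_e$; rescaling the coordinates normal to $X_F$ splits the scattering equations in the limit into those of the restricted arrangement $\A_F$ on $X_F$, with domain the face $P\cap X_F$ (a chamber of $\A_F$, of dimension $d-\rank F$), and those of the transverse central arrangement $\{H_e:e\in F\}$ on the normal $\P^{\rank F-1}$, with domain the corresponding transverse chamber. The Hessian determinant becomes block-diagonal in this limit; the half-integrand $h$ factorizes because, by the recursive axiom of \cref{def:PG}, the residue of $\Omega_P$ along the facet flat $X_F$ is the canonical form of the face $P\cap X_F$; and careful bookkeeping of the single small parameter $a_F$ extracts exactly one power $a_F^{-1}$. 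The conclusion is $\Res_{a_F=0}\A_P(\a)=\A_{P\cap X_F}(\a')\cdot \A^{\mathrm{tr}}_F(\a'')$ for the induced parameters $\a',\a''$; when $F\notin L(P)$ the face is empty, the degenerating cluster sees no polytope in the normal directions, and the same analysis gives no pole. I expect this factorization --- a ``CHY factorization on flats'' for hyperplane arrangements, controlling simultaneously which critical points collide, the block structure of the Hessian, and the power counting in $a_F$ --- to be the main obstacle; the positive-geometry recursion for $\Omega_P$ is precisely what makes the half-integrand cooperate.

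Granting the factorization, the induction closes at once. Both factors have dimension strictly less than $d$, so by the inductive hypothesis $\A_{P\cap X_F}(\a')=\sum a_{T_\bullet}^{-1}$ and $\A^{\mathrm{tr}}_F(\a'')=\sum a_{S_\bullet}^{-1}$, summed over maximal flags of flats on the boundaries of the respective chambers. A maximal flag $F_\bullet\in\Fl(P)$ with $F_{\rank F}=F$ is exactly a pair of such flags --- the flats of $F_\bullet$ below $F$ giving an $S_\bullet$, those above giving a $T_\bullet$ --- with $a_{F_\bullet}=a_F\,a_{S_\bullet}a_{T_\bullet}$; hence $\Res_{a_F=0}$ of the right-hand side of \eqref{eq:AP} equals $\bigl(\sum a_{S_\bullet}^{-1}\bigr)\bigl(\sum a_{T_\bullet}^{-1}\bigr)=\Res_{a_F=0}\A_P$. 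Since this holds for every $F$, the difference $\A_P(\a)-\sum_{F_\bullet}a_{F_\bullet}^{-1}$ has no poles on any $\{a_F=0\}$, hence (its poles being confined to that union) no poles at all, so it is a polynomial; being homogeneous of degree $-d<0$, with the base case $d=0$ --- where both sides equal $1$ --- handled directly, it vanishes identically. As an alternative to the degeneration analysis, one could instead identify $\A_P(\a)$ with the value of the Schechtman--Varchenko contravariant (``Hessian'') bilinear form on the Orlik--Solomon algebra evaluated on the twisted cycle of $P$, and read off \eqref{eq:AP} from the known combinatorial form of that pairing in the flag basis \cite{Varbook}; this trades the analytic factorization for structural input on twisted (co)homology.
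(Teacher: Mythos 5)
Your proposal takes a genuinely different route from the one the paper (this is a survey; the theorem is cited from \cite{LamMat}) points at. The survey indicates that \cite{LamMat} identifies $\A_P$ with the twisted de Rham intersection pairing $\gdRip{\Omega_P,\Omega_P}$, shows this pairing is essentially the Schechtman--Varchenko contravariant form on the Orlik--Solomon algebra, and then extracts \eqref{eq:AP} combinatorially; your \emph{alternative} at the end is essentially this, except that you would feed the SV form the \emph{twisted cycle} of $P$ rather than the cohomology class $\Omega_P$ --- those are dual objects, and the paper's own table is careful to distinguish the de Rham pairing (giving the amplitude) from the Betti pairing on cycles (giving the inverse KLT matrix). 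So be careful there.

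Your \emph{main} route --- induction on $d$, matching simple poles and residues via a ``CHY factorization on flats,'' closing with homogeneity --- is a physics-flavored argument and structurally parallels \cref{thm:APres} (which the survey states as locality and unitarity for the matroid amplitude), so the endpoint is certainly believable. But as written there is a real gap, and you half-flag it yourself: the factorization $\Res_{a_F=0}\A_P = \A_{P\cap X_F}\cdot\A^{\mathrm{tr}}_F$ for the critical-point sum of \cref{def:CHY} is a nontrivial degeneration analysis, not a bookkeeping exercise. You need to control (i) exactly which critical points cluster onto $X_F$ as $a_F\to 0$ and at what rate, (ii) the asymptotic block structure and scaling of the Hessian determinant, (iii) the scaling of the \emph{squared} density $h(p)^2$ near the face (the residue of $\Omega_P$ gives you one factor of $\Omega_{P\cap X_F}$, but $h$ appears squared, so the counting is not immediate), and (iv) why the net power is exactly $a_F^{-1}$. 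You also use ``same polar divisor'' without establishing that the poles of $\A_P$ are simple or that they only occur at $a_F=0$: Orlik--Terao controls where critical points escape, but simplicity of the resulting poles and the absence of poles from collisions of critical points at interior points of $U$ both require separate arguments. None of these are impossible, and you correctly identify the positive-geometry recursion for $\Omega_P$ as the ingredient that should make the half-integrand cooperate --- but as the proof stands, the key analytic lemma is asserted rather than proved. If you want to keep the residue route, you should either prove the factorization in the spirit of CHY degenerations on $\M_{0,n}$, or first establish $\A_P=\gdRip{\Omega_P,\Omega_P}$ and then import the algebraic factorization of the SV form --- at which point you have essentially reverted to the paper's approach.
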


\begin{example} We take $U = \M_{0,5}$, depicted in \cref{fig:M05}.  Here, we have used $\PGL(2)$ to fix $(z_1,z_4,z_5) = (0,1,\infty)$, and the positive part $(\M_{0,5})_{>0}$ is the chamber $P$ given by $z_1 = 0 < z_2 < z_3 < 1 = z_4$.  There are six terms in the summation \eqref{eq:AP}, and we obtain another formula for the $\phi^3$-amplitude:
\begin{multline}\label{eq:temporal}
A_5^{\phi^3} = \frac{1}{s_{12}(s_{12}+s_{13}+s_{23})} +  \frac{1}{s_{12}(s_{12}+s_{34})} +  \frac{1}{s_{34}(s_{12}+s_{34})} \\
\;\;+  \frac{1}{s_{34}(s_{23}+s_{24}+s_{34})} +  \frac{1}{s_{23}(s_{23}+s_{24}+s_{34})} +  \frac{1}{s_{23}(s_{12}+s_{13}+s_{23})} 
\end{multline}
For instance, one flag is $\emptyset \lessdot \{(34)\} \lessdot \{(23),(24),(34)\} \lessdot E = \{(12),(13),(14),(23),(24),(34)\}$, and corresponds to the term with denominator $s_{34}(s_{23}+s_{24}+s_{34})$.  One can check that this summation equals \eqref{eq:5phi3}, noting the relations \eqref{eq:sij}.  Whereas \eqref{eq:5phi3} is a summation over Feynman diagrams that are cubic planar trees, the six-term summation \eqref{eq:temporal} is a summation over \emph{temporal Feynman diagrams} \cite{LamMat}.
\end{example}

\begin{figure}
\begin{center}
$$
\begin{tikzpicture}[scale=1.2,extended line/.style={shorten >=-#1,shorten <=-#1},
 extended line/.default=1cm]
\draw (0,-1) -- (0,2.6);
\draw (1,-1) -- (1,2.6);
\draw (-1,0) -- (2.6,0);
\draw (-1,1) -- (2.6,1);
\draw(-1,-1) -- (2.6,2.6);
\filldraw[fill=red,draw = black] (0,0)--(1,1)--(0,1)--(0,0);
\node at (0,2.8) {\mini $z_2 = 0$};
\node at (1,2.8) {\mini $z_2 = 1$};
\node at (2.6,0.15) {\mini $z_3 = 0$};
\node at (2.6,1.15) {\mini $z_3 = 1$};
\node at (2.6,2.8) {\mini $z_2 = z_3$};
\node[color=blue] at (0.2,2.4) {\mini $s_{12}$};
\node[color=blue] at (1.2,2.4) {\mini $s_{24}$};
\node[color=blue] at (2.7,2.4) {\mini $s_{23}$};
\node[color=blue] at (2.4,-0.15) {\mini $s_{13}$};
\node[color=blue] at (2.4,.85) {\mini $s_{34}$};

\begin{scope}[shift={(7,0)}]
\draw (0,-1) -- (0,2.6);
\draw (1,-1) -- (1,2.6);
\draw (-1,0) -- (2.6,0);
\draw (-1,1) -- (2.6,1);
\draw(-1,-1) -- (2.6,2.6);
\filldraw[color=red] (1,1) circle (0.06cm);
\draw[red,thick] (0,1)--(1,1);
\node at (0,2.8) {\mini $z_2 = 0$};
\node at (1,2.8) {\mini $z_2 = 1$};
\node at (2.6,0.15) {\mini $z_3 = 0$};
\node at (2.6,1.15) {\mini $z_3 = 1$};
\node at (2.6,2.8) {\mini $z_2 = z_3$};
\node[color=blue] at (0.2,2.4) {\mini $s_{12}$};
\node[color=blue] at (1.2,2.4) {\mini $s_{24}$};
\node[color=blue] at (2.7,2.4) {\mini $s_{23}$};
\node[color=blue] at (2.4,-0.15) {\mini $s_{13}$};
\node[color=blue] at (2.4,.85) {\mini $s_{34}$};
\end{scope}
\end{tikzpicture}
$$
\end{center}
\caption{The moduli space $\M_{0,5}$ of $5$ points on $\P^1$ as a hyperplane arrangement complement.  Each of the five hyperplanes has been labeled by one of the Mandelstam variables.  Left: one of the chambers, the positive component, has been highlighted in red.  Right: the flag of lattices $ \{(34)\} \lessdot \{(23),(24),(34)\} $ has been highlighted.}
\label{fig:M05}
\end{figure}

We turn to a cohomological interpretation of the amplitude $\A_P(\a)$.  We have a logarithmic connection $(\O_U,\nabla_\a := d + \omega \wedge)$ on the trivial rank one vector bundle $\O_U$ on $U$, giving a twisted de Rham cohomology group $H^d(U,\nabla_\a)$.  By a result of Esnault--Schechtman--Viehweg \cite{ESV}, this twisted cohomology group can be identified with the cohomology of the Aomoto complex.  This is the complex on the Orlik-Solomon algebra $\OS^\bullet(M)$ (see \cref{sec:matroid}) with differential $\omega \wedge$.  Under a genericity hypothesis \cite[(2.1)]{LamMat}, one has a regularization isomorphism
$$
\reg: H^k(U,\nabla_\a) \stackrel{\cong}{\longrightarrow} H^k_c(U,\nabla_\a)
$$
between twisted cohomology and compactly supported twisted cohomology, inverse to the natural maps between these cohomologies.  We then have a twisted intersection form, first studied by Cho and Matsumoto \cite{CM}: \begin{equation}\label{eq:dR}
\gdRip{\cdot,\cdot}: H^d(U,\nabla^\vee_\a) \otimes H^d(U,\nabla_\a) \xrightarrow{{\rm id} \otimes \reg} H^d(U,\nabla^\vee_\a) \otimes H^d_c(U,\nabla_\a) \xrightarrow{\text{Poincar\'e-Verdier}} \C, 
\end{equation}
where $\nabla^\vee_\a = \nabla_{-\a}$ denotes the dual connection.  In the following formula, we view the holomorphic top-form $\Omega_P$ as an element of both $H^d(U,\nabla^\vee_\a)$ and $H^d(U,\nabla_\a)$. 

\begin{theorem}[\cite{Miz,MHcoh,LamMat}]
The amplitude $\A_P(\a)$ is equal to $\gdRip{\Omega_P, \Omega_P}$.
\end{theorem}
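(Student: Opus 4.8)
The plan is to reduce the identity to a critical-point localization of the twisted intersection pairing and then quote \cref{def:CHY} verbatim. Write $\Phi := \log\varphi$ for the scattering potential. By the vanishing theorem of \cite{ESV}, for generic $\a$ the cohomology $H^\bullet(U,\nabla_\a)$ is concentrated in degree $d$ and is computed by the Aomoto complex on the Orlik--Solomon algebra; by \cite{OT} the function $\Phi$ is then Morse on $U$ with exactly $|\chi(U)| = \dim_\C H^d(U,\nabla_\a)$ nondegenerate critical points, all lying in $U$. The first step is to set up the \emph{critical-set identification}
\[
H^d(U,\nabla_\a)\ \xrightarrow{\ \sim\ }\ \O(\Crit(\omega))\ \cong\ \C^{\,|\chi(U)|},\qquad [\,g\,dx_1\cdots dx_d\,]\ \longmapsto\ g|_{\Crit(\omega)},
\]
which for logarithmic top-forms is the de Rham shadow of the Jacobian-ring description $H^d(U,\nabla_\a)\cong\O_U/(\partial_1\Phi,\dots,\partial_d\Phi)$ (the ``derivative terms'' in $\nabla_\a$ are absorbed by a filtration on pole order, as in \cite{ESV} and the critical-set algebra of \cite{Varbook}).

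Under this identification the pairing $\gdRip{\cdot,\cdot}$ of \eqref{eq:dR} becomes the Grothendieck residue pairing on the Artinian Gorenstein ring $\O(\Crit(\omega))$: for $g\,dx_1\cdots dx_d$ and $g'\,dx_1\cdots dx_d$,
\[
\gdRip{\,g\,dx_1\cdots dx_d,\ g'\,dx_1\cdots dx_d\,}\ =\ \sum_{p\in\Crit(\omega)}\frac{g(p)\,g'(p)}{\det\!\left(\dfrac{\partial^2\Phi}{\partial x_i\,\partial x_j}\right)_{\!p}}\,.
\]
The Hessian appears after clearing denominators: with $Q=\prod_e f_e$ and $\partial_i\Phi = P_i/Q$ one has $\Crit(\omega)=V(P_1,\dots,P_d)$ and, on the critical locus, $(\partial_j P_i)(p) = Q(p)\,(\partial_i\partial_j\Phi)(p)$ (the term $(\partial_j Q)\,\partial_i\Phi$ drops out), so $\det(\partial_j P_i)(p) = Q(p)^d\det(\partial_i\partial_j\Phi)(p)$, and the powers of $Q$ in the local residue $\Res_p[\,\psi\,dx/(P_1\cdots P_d)\,]=\psi(p)/\det(\partial_j P_i)(p)$ cancel against $\psi = g\,g'\,Q^{d}$. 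Now take $g=g'=h$, where $\Omega_P = h\,dx_1\cdots dx_d$; since the poles of $h$ lie on the facets of $P$, which are contained in $\A$, the function $h$ is regular on $U$ and in particular at every $p\in\Crit(\omega)$. The displayed formula then reads
\[
\gdRip{\Omega_P,\Omega_P}\ =\ \sum_{p\in\Crit(\omega)} h(p)^2\,\det\!\left(\dfrac{\partial^2\log\varphi}{\partial x_i\,\partial x_j}\right)_{\!p}^{-1},
\]
which is exactly $\A_P(\a)$ by \cref{def:CHY}.

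The main obstacle is the middle step: identifying the Poincar\'e--Verdier intersection pairing with the residue/trace pairing on $\O(\Crit(\omega))$ --- and, relatedly, checking that $[\Omega_P]$ really maps to $h|_{\Crit(\omega)}$ once the regularization $\reg$ and the orientation-dependent sign of $\Omega_P$ are taken into account. This is the analytic heart, and is what \cite{Miz,MHcoh,LamMat} supply. I would carry it out by a steepest-descent argument: rescale $\a\mapsto\a/\hbar$, pair twisted cycles against cocycles, and invoke the twisted period relations of Cho--Matsumoto \cite{CM} to express $\gdRip{\Omega_P,\Omega_P}$ through the period matrix; as $\hbar\to0$ each critical point contributes a Gaussian $h(p)^2/\det(\partial_i\partial_j\Phi)(p)$, and since $\gdRip{\Omega_P,\Omega_P}$ is homogeneous of degree $-d$ in $\a$ the leading asymptotics is the exact rational function. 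An entirely combinatorial alternative is to evaluate $\gdRip{\Omega_P,\Omega_P}$ by Mizera's recursive fibration formula for intersection numbers of logarithmic forms, obtaining the flag sum $\sum_{F_\bullet\in\Fl(P)} 1/a_{F_\bullet}$, which equals $\A_P(\a)$ by \cref{thm:AP}; this bypasses the residue formula at the cost of bookkeeping the pole poset of $\Omega_P$, and it serves as a cross-check on small cases such as the $\M_{0,5}$ example above.
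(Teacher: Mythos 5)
The paper is a survey: it does not prove this statement but attributes it to \cite{Miz,MHcoh,LamMat}, and remarks that \cite{LamMat} gives a ``new approach from the perspective of the combinatorics of matroids and hyperplane arrangements.'' With that caveat, your outline correctly captures the analytic route of Matsubara-Heo (and, for $\M_{0,n}$, Mizera): under nonresonance, $H^d(U,\nabla_\a)$ localizes to the Artinian algebra $\O(\Crit(\omega))$, the Poincar\'e--Verdier pairing becomes the Grothendieck residue/trace pairing there, and evaluating on $\Omega_P = h\,dx$ reproduces the critical-point sum of \cref{def:CHY}. The Hessian computation via $P_i = Q\,\partial_i\Phi$ is correct but an unnecessary detour --- the local residue of $gg'\,dx$ against the Jacobian ideal $(\partial_1\Phi,\dots,\partial_d\Phi)$ at a nondegenerate $p$ is directly $(gg')(p)/\det(\partial_i\partial_j\Phi)(p)$, with no $Q^d$ bookkeeping needed once one works with the rational generators $\partial_i\Phi$ themselves. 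You rightly identify the genuine content of the theorem as the middle step (Poincar\'e--Verdier $=$ residue pairing, compatibly with $\reg$), and you defer it to the cited works.

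Two things worth flagging. First, your steepest-descent sketch (rescale $\a\mapsto\a/\hbar$, Gaussian localization, exactness by homogeneity) is a heuristic rather than a proof: homogeneity of degree $-d$ does not by itself promote the leading-order asymptotics of a period integral to an exact identity --- one must pass through the twisted Riemann bilinear relations of Cho--Matsumoto \cite{CM} relating the de Rham and Betti intersection matrices to the period matrix, and control the error terms, which is precisely the technical work in \cite{Miz,MHcoh}. Second, the ``combinatorial alternative'' you describe as a cross-check --- evaluating $\gdRip{\Omega_P,\Omega_P}$ via a recursive residue formula to obtain the flag sum $\sum_{F_\bullet\in\Fl(P)}1/a_{F_\bullet}$ and then invoking \cref{thm:AP} --- is in fact not a cross-check but the primary approach of \cite{LamMat} that the paper highlights: there the pairing is lifted to a purely combinatorial bilinear form on the Orlik--Solomon algebra (identified with the Schechtman--Varchenko form), and the flag sum is derived matroidally without ever touching $\Crit(\omega)$. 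So your two proposed routes correspond, respectively, to the analytic proof and the combinatorial proof in the literature; presenting the second as merely a sanity check undersells it.
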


The relationship between the summation over critical points in \cref{def:CHY} and twisted cohomology was established by Matsubara-Heo \cite{MHcoh} in a general setting, and first by Mizera \cite{Miz} for $\M_{0,n}$.  Our work \cite{LamMat} gives a new approach from the perspective of the combinatorics of matroids and hyperplane arrangements.

 More generally, one can consider the twisted cohomological pairings $\gdRip{\Omega_P, \Omega_Q}$ for two chambers $P$ and $Q$.  These pairings are called partial amplitudes, and have a close relation to the \emph{double copy} \cite{BCJ} and \emph{color-kinematics duality} phenomena in scattering amplitudes.  We show in \cite{LamMat} that this pairing is essentially given by the Schechtman--Varchenko contravariant form \cite{SV}.  An earlier but somewhat different relation between twisted cohomology and the Schechtman--Varchenko form was established by Belkale, Brosnan, and Mukhopadhyay \cite{BBM}; see \cite[Remark 2.14]{LamMat}.   
 
 \section{Matroid geometry.}\label{sec:matroid}
The story in \cref{sec:hyper} and in particular the rational function amplitudes $\A_P$ can be generalized to topes $P$ of oriented matroids replacing chambers $P$ of hyperplane arrangement complements.  Let $\M$ be an oriented matroid with underlying matroid $M$ with ground set $E$.  The oriented matroid $\M$ has a set of topes $\T(\M)$.  Each tope $P \in \T(\M)$ is a sign vector $P \in \{+,-\}^E$.  When $\M$ arises from a real hyperplane arrangement, a tope corresponds to a chamber $P$ of $U(\R)$ and the sign vector records which side of each hyperplane $P$ lies on.

The starting point is the work of Eur and the author \cite{EL} generalizing the canonical forms of polytopes to topes of arbitrary oriented matroids.  The canonical form $\Omega_P$ of a tope $P \in \T(\M)$ belongs to the \emph{Orlik-Solomon algebra} $\OS^\bullet(M)$ (our Orlik-Solomon algebras are the \emph{reduced} or \emph{projective} versions).  The Orlik-Solomon algebra is a combinatorial model for the cohomology ring of a hyperplane arrangement complement, and can be defined for an arbitrary (possibly not realizable) matroid $M$.  It is a quotient, by some explicit relations, of the exterior algebra $\Lambda^\bullet(E_0)$ in the generators $(e-e_0)$, $e \in E$, where $e_0 \in E$ is a distinguished element of the ground set.

To state the result, we will use residue maps $\Res_\atom: \OS^\bullet(M) \to \OS^{\bullet-1}(M/\atom)$ between Orlik-Solomon algebras \cite{EL}, where $c$ denotes an atom of the lattice of flats $L(M)$.  These residue maps are an abstract matroid analogue of the residues from \eqref{eq:resdef}.  In the following result, we fix a chirotope $\chi$ for $\M$, used to keep track of signs due to orientations.  Let $r$ denote the rank of $\M$ and $d = r-1$, 
\begin{theorem}\label{thm:EL}
For each tope $P \in \T(\M)$, there exists a distinguished element $\Omega_P \in \OS^d(M)$ satisfying the following recursion:
\begin{enumerate}
\item
If $\M$ has rank $r = 1$, then $\Omega_P = P(e) \chi(e) \in \{1,-1\} \subset \OS^0(M)$ for any $e \in E$.   
\item
If $\M$ has rank $r \geq 1$, then for any atom $\atom$ of $M$, we have
$$
\Res_\atom \Omega_P = \begin{cases} \Omega_{P/\atom} \in \OS(M/\atom) &\mbox{if $\atom \in L(P)$,} \\
0 & \mbox{otherwise.}
\end{cases}
$$
\end{enumerate}
Here, $P/\atom \in \T(\M/\atom)$ is the contracted tope in the contraction $\M/\atom$, and $L(P)$ denotes Las Vergnas face lattice.
\end{theorem}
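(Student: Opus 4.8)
The plan is to prove the theorem by induction on the rank $r$ of $\M$ (equivalently on $d = r-1$): construct $\Omega_P \in \OS^d(M)$ by an explicit formula, verify the residue recursion directly, and deduce the uniqueness implicit in the word ``distinguished'' from an injectivity property of the residue maps. The base case $r = 1$, $d = 0$ is the stated normalization $\Omega_P = P(e)\chi(e) \in \OS^0(M)$, which is independent of $e \in E$. For $r \geq 2$ I would take as the \emph{defining formula} the Orlik--Solomon counterpart of the vertex sum \eqref{eq:simple} (and of the flag sum of \cref{thm:AP}): for a simple tope $P$,
$$
\Omega_P \;=\; \sum_{v\ \text{vertex of}\ P}\ \pm\, a_{f_1^v}\,a_{f_2^v}\cdots a_{f_d^v}\ \in\ \OS^d(M),
$$
where $f_1^v,\dots,f_d^v$ are the facet atoms of $P$ incident to a vertex $v$ (those rank-$1$ flats of $M$ lying in $L(P)$ whose hyperplane contains $v$) and the sign is read off from the chirotope $\chi$; for a general tope one replaces each non-simplicial vertex cone by a triangulation, equivalently one sums over maximal flags of $L(P)$ with facet-atom representatives. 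Here I would first record the elementary fact (valid for topes via the Las Vergnas face lattice) that the flat spanned by any face of $P$ equals the join of the facet atoms through it, so that such representatives always exist; the chirotope signs are to be normalized so that they degenerate to $P(e)\chi(e)$ on rank-$1$ minors.

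The heart of the argument is the computation of $\Res_\atom \Omega_P$ for an atom $\atom$ of $L(M)$. Because every monomial in $\Omega_P$ is a product of facet atoms of $P$, the combinatorial residue $\Res_\atom$ annihilates it unless $\atom \in L(P)$, which gives the ``otherwise'' clause immediately. When $\atom \in L(P)$, the vertices of $P$ lying on the hyperplane $\atom$ are exactly the vertices of the contracted tope $P/\atom \in \T(\M/\atom)$; for such a vertex $v$ one of the $f_i^v$ equals $\atom$, and $\Res_\atom(a_{f_1^v}\cdots a_{f_d^v}) = \pm\, a_{\overline{f_1^v}}\cdots\widehat{a_{\overline{f_i^v}}}\cdots a_{\overline{f_d^v}}$ is, up to a sign, the corresponding frame monomial of $\OS^{d-1}(M/\atom)$, bars denoting images under $L(M)\to L(M/\atom)$. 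Summing over $v$, $\Res_\atom\Omega_P$ is exactly the formula defining $\Omega_{P/\atom}$, \emph{provided the signs agree} --- that is, provided $\chi$ and $P$ contract compatibly to the chirotope and tope of $\M/\atom$ used to build $\Omega_{P/\atom}$ inductively. I expect this sign/chirotope bookkeeping, together with the reduction of the non-simple case to triangulated vertex cones, to be the main obstacle: the combinatorial identities on flats and flags are robust, but propagating orientations through contraction in an abstract, possibly non-realizable oriented matroid requires care. A clean way to organize it is to fix once and for all a rule expressing the sign of each vertex term via iterated evaluations of $\chi$, and to verify the single identity that this rule, restricted to flags through $\atom$, reproduces the corresponding rule for $\chi/\atom$.

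For the uniqueness, I would invoke that for $d \geq 1$ the combined residue map $\bigoplus_{\atom}\Res_\atom : \OS^d(M)\to\bigoplus_{\atom}\OS^{d-1}(M/\atom)$ is injective. This follows from the standard deletion--restriction (Brieskorn) decomposition of Orlik--Solomon algebras: the kernel of a single $\Res_\atom$ is the image of $\OS^d(M\setminus\atom)$, so a class killed by every $\Res_\atom$ is pulled back from $\OS^d(M\setminus\atom)$ for all atoms $\atom$, which --- using that $\OS^d(M\setminus\atom) = 0$ when $\atom$ is a coloop, and that the Brieskorn inclusions commute with residues --- forces it to vanish by induction on $|E|$. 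Hence the element constructed above is the unique element of $\OS^d(M)$ satisfying the prescribed residue identities, completing the proof.
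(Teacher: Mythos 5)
Your overall strategy --- construct $\Omega_P$ by an explicit vertex/flag formula, check the residue recursion, and get uniqueness from injectivity of $\bigoplus_\atom\Res_\atom$ via deletion--restriction --- has the right shape, and the uniqueness step is correct and cleanly organized: the kernel of each $\Res_\atom$ is the image of $\OS^d(M\setminus\atom)$, the coloop case gives $\OS^d(M\setminus\atom)=0$, and the compatibility of the deletion inclusions with residues lets the induction on $|E|$ close. So the recursion does pin down $\Omega_P$ uniquely once an element satisfying it is exhibited.

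The gap is that the existence half is not actually carried out. Your ``defining formula'' $\Omega_P=\sum_v \pm\, a_{f_1^v}\cdots a_{f_d^v}$ is not a definition until the signs are specified, and specifying them is where the content of the theorem lives: you must give a rule combining the chirotope $\chi$ with the sign vector $P$ and the flag ordering; prove that the resulting term is independent of the choice of facet-atom representative $e_i\in F_i\setminus F_{i-1}$ in a flag (a real claim, even in the realizable case); and prove that the sum is independent of the triangulation used to reduce a non-simple tope to the simplicial case. The last point is more serious than you suggest: triangulations of topes of non-realizable oriented matroids are delicate objects, and it is better to avoid them entirely by working with a flag sum over $\Fl(P)$ from the outset, which your own remark gestures at but does not develop. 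Finally, you correctly identify compatibility of the sign rule with contraction --- that the flags of $\Fl(P)$ through $\atom$ contribute exactly $\Omega_{P/\atom}$, with matching signs --- as the crux, but an acknowledged obstacle is still a gap; it is precisely the step that makes the recursion close, and it is where a realizability-free argument must do genuine work. Until these items are in place, you have a plausible blueprint for the paper's construction rather than a proof of existence.
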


The Las Vergnas face lattice $L(P)$ of a tope $P$ is the analogue of the face lattice of a polytope.

Before moving on to matroid amplitudes, let us mention some applications of the canonical forms $\Omega_P$.  The most well-known basis of the Orlik-Solomon algebra $\OS^\bullet(M)$ is the no-broken-circuit basis.  Canonical forms provide an alternative basis.  Define a chamber $P$ of the hyperplane arrangement complement $U$ to be \emph{$k$-bounded} if it intersects a fixed but generic codimension $k$ hyperplane $W$, but does not intersect a fixed but generic codimension $k+1$ hyperplane $W' \subset W$.  This notion can be extended to topes of oriented matroids; see \cite{EL}. 

\begin{theorem}\label{thm:bounded}
The canonical forms $\Omega_P$ as $P$ varies over $k$-bounded topes form a basis for $\OS^{d-k}(M)$.
\end{theorem}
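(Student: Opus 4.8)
The plan is to reduce the general statement to the case $k=0$ — where the assertion is that the canonical forms of the bounded topes form a basis of the top Orlik–Solomon group $\OS^d(M)$ — by passing to a generic codimension-$k$ linear section, and then to handle the $k=0$ case by separately establishing a dimension count and linear independence.

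For the reduction, let $W$ be the generic codimension-$k$ subspace used to define $k$-boundedness and let $\M_W$ be the oriented matroid of $\A|_W$ (abstractly, the rank-$(d-k+1)$ truncation of $\M$). By construction a tope $P$ of $\M$ is $k$-bounded exactly when $P\cap W$ is a bounded tope of $\M_W$, and the Las Vergnas face lattices match: $L(P\cap W)=\{F\cap W:F\in L(P)\}$. Since a generic restriction induces a rank-preserving bijection of flats of rank $\le d-k$, the Brieskorn decomposition shows that restriction of forms is an \emph{isomorphism} $\OS^{d-k}(M)\xrightarrow{\ \sim\ }\OS^{d-k}(M_W)$. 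Using the locality of residues and the uniqueness clause of \cref{thm:EL}, I would check that under this isomorphism $\Omega_P$ is carried to $\Omega_{P\cap W}$ (match residues along each atom and induct on $k$). Hence it suffices to prove the theorem for $(\M_W,0)$.

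For the case $k=0$, the first point is that the number of bounded topes of $\M$ equals $\dim_\C\OS^d(M)$. In the realizable case this is Zaslavsky's theorem — the number of bounded regions of an arrangement is, up to sign, the leading coefficient of its reduced characteristic polynomial — combined with the Orlik–Solomon identification of that coefficient with the top Betti number; for general oriented matroids one checks that both sides satisfy the same deletion–contraction recursion on the underlying matroid, the count of bounded topes being controlled via the topological representation theorem. The second point is linear independence of the $\Omega_P$, $P$ bounded, in $\OS^d(M)$, which I would prove by induction on the rank $r=d+1$. The base case $r=1$ is immediate: there is one bounded tope, with canonical form $\pm 1$, a basis of $\OS^0(M)=\C$. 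For the step, suppose $\sum_P c_P\,\Omega_P=0$ over bounded topes. Applying $\Res_\atom$ for an atom $\atom$ and using \cref{thm:EL} gives $\sum_{P:\,\atom\in L(P)}c_P\,\Omega_{P/\atom}=0$ in $\OS^{d-1}(M/\atom)$; a facet of a bounded tope is a bounded tope of the contraction $\M/\atom$, so every $\Omega_{P/\atom}$ that appears is a canonical form of a bounded tope of $\M/\atom$, and these are linearly independent by the inductive hypothesis. Since $P\mapsto P/\atom$ is at most two-to-one on bounded topes with $\atom\in L(P)$ — its fibers lie inside pairs $\{(Q,+),(Q,-)\}$ of topes differing only in the sign of $\atom$ — one extracts, for each bounded tope $Q$ of $\M/\atom$, the relation $c_{(Q,+)}+c_{(Q,-)}=0$ (the coefficient of an unbounded, hence absent, tope being $0$).

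The main obstacle is to conclude from these relations — ``$c_P=-c_{P'}$ whenever two bounded topes share a bounded facet, and $c_P=0$ whenever the opposite side of a bounded facet of $P$ is unbounded'' — that every $c_P$ vanishes. I would approach this through the combinatorial topology of the bounded complex of $\M$: after a generic affine perturbation it is shellable, and I expect a shelling order, decorated by the chirotope signs, to make the relation system triangular. Alternatively I would bypass the induction at this step entirely, proving independence by a duality argument — in the realizable case via the Poincaré–Lefschetz pairing $H^d(U)\times H^d_c(U)\to\C$, under which bounded regions generate $H^d_c(U)$, and in general via the untwisted specialization of the intersection form of \cref{sec:hyper} — or by showing $\Omega_P$ is unitriangular with respect to the $\beta$nbc basis of $\OS^d(M)$ along a shelling order of the bounded complex.
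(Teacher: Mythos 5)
The paper is a survey and states Theorem~\ref{thm:bounded} without proof, citing~\cite{EL}; so there is no in-paper argument to compare against, and I will assess your proposal directly. Your reduction to $k=0$ has the right idea but a degree mismatch: Theorem~\ref{thm:EL} always places $\Omega_P$ in the top degree $\OS^d(M)$, so the sentence ``under this isomorphism $\Omega_P$ is carried to $\Omega_{P\cap W}$'' applies a map $\OS^{d-k}(M)\to\OS^{d-k}(M_W)$ to an element of $\OS^{d}(M)$ and does not parse. For $k>0$ the statement must be read with $\Omega_P$ \emph{meaning} $\Omega_{P\cap W}\in\OS^{d-k}(M_W)\cong\OS^{d-k}(M)$; once this is understood, the reduction to $k=0$ is essentially bookkeeping (matching $k$-bounded topes of $\M$ with bounded topes of the truncation $\M_W$), and the real content is the $k=0$ case.

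The gap you flag in the $k=0$ linear-independence step is genuine as you have written it, but it closes more directly than by a shelling or duality argument. Your residue relations say that $c$ is alternating on the graph $G$ whose vertices are bounded topes and whose edges are shared \emph{bounded} facets, and that $c_P=0$ whenever $P$ has a bounded facet across which the neighbor is unbounded. It therefore suffices that every connected component of $G$ contain such a ``boundary'' tope, and this is automatic: let $K$ be the union of the closed bounded topes in one component; $K$ is compact, so each $(d-1)$-cell of $\partial K$ is a bounded facet of some $P$ in the component, and the tope across it cannot be bounded (adjacency across a bounded facet would put it in the same component, hence inside $K$, contradicting that the cell lies on $\partial K$), so it is unbounded and $c_P=0$; propagating along the component gives $c\equiv 0$. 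Combined with the dimension count this finishes the induction without invoking shellability. One further caution: track the sign in $\Res_\atom\Omega_P=\Omega_{P/\atom}$. In the interval example $\Omega_{(a,b)}=\left(\frac{1}{x-a}-\frac{1}{x-b}\right)dx$, two intervals sharing an endpoint $b$ contract to the \emph{same} tope of $\M/b$ yet have \emph{opposite} residues at $b$, so the residue axiom as quoted cannot be literally sign-free; the relation you obtain may be $c_P=c_{P'}$ rather than $c_P=-c_{P'}$. Either sign is compatible with the argument above, but you should resolve the convention rather than inherit the survey's informal statement.
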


\Cref{thm:bounded} gives a concrete realization of a classical result of Greene and Zaslavsky \cite{GZ} stating that the coefficients of the characteristic polynomial of $M$ are equal to the numbers of $k$-bounded chambers.  In the realizable setting, Yoshinaga~\cite{Yos} studied the canonical forms of chambers using a different definition. In very recent work, Brown and Dupont \cite{BDpos} also studied the canonical forms of chambers, and like us, were motivated by positive geometry.

\begin{example}
Let $\M$ be the oriented matroid of rank 2 associated to the arrangement of $n$ points on $\P^1(\R)$, arranged in order, and denote $E = \{e_1,\ldots,e_n\}$.  For each $i = 1,2,\ldots,n$, we have the tope $(i,i+1)$ (an open interval on the real line) with canonical form $e_{i+1} - e_i$, indices taken modulo $n$.  We thus have $n$ canonical forms.  A generic hyperplane of codimension $1$ in this case is simply a point, and we may pick it in the interval $(n,1)$.  Then every other interval is $0$-bounded, and indeed $e_2 -e_1,e_3-e_2,\ldots,e_n-e_{n-1}$ form a basis of $\OS^1(M)$.
\end{example}

Using the canonical forms of topes from \cref{thm:EL}, we can define amplitudes for matroids as follows.  The geometrically defined intersection form $\gdRip{\cdot,\cdot}$ of \eqref{eq:dR} can be lifted to a bilinear form $\ip{\cdot,\cdot}$ on the Orlik-Solomon algebra of $M$, which is purely combinatorial.  This bilinear form is well-defined for any matroid and as shown in \cite{LamMat}, it recovers the matroid generalization \cite{BV} of the Schechtman--Varchenko bilinear form.  We then define the amplitude of a tope $P$ to be
$$
\A_P(\a):= \ip{\Omega_P,\Omega_P},
$$
using the bilinear form on the Orlik-Solomon algebra.  The combinatorial formula \cref{thm:AP} continues to hold for amplitudes of topes.  Furthermore, one has locality and unitarity for these amplitudes.

\begin{theorem}[\cite{LamMat}]\label{thm:APres}
The poles of the rational function $\A_P$ are all simple and can only be along $\{a_F = 0\}$ for connected flats $F$.  We have the recursion
$$
(a_F \A_P)|_{a_F = 0}=  \A_{P|_F} \A_{P|_{E \setminus F}}
$$
where $P|_F$ and $P|_{E \setminus F}$ denotes the restriction and contraction of the tope $P$ in the oriented matroid $\M^F$ (restriction to $F$) and $\M_F$ (contraction by $F$).
\end{theorem}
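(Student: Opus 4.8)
The plan is to compute directly from the combinatorial formula of \cref{thm:AP}, which the text records continues to hold for topes: $\A_P(\a) = \sum_{F_\bullet \in \Fl(P)} a_{F_\bullet}^{-1}$, summed over maximal flags $F_\bullet = (\hat 0 = F_0 \subsetneq F_1 \subsetneq \cdots \subsetneq F_r = E)$ of the Las Vergnas face lattice $L(P)$, with $a_{F_\bullet} = \prod_{i=1}^{r-1} a_{F_i}$ and $a_F = \sum_{e \in F} a_e$. Every denominator is an $a_F$ with $F \in L(P)$ a \emph{proper} flat ($0 < \rank F < r$), and distinct flats give non-proportional linear forms, so the only candidate polar hyperplanes of $\A_P$ are the $\{a_F = 0\}$ with $F \in L(P)$ proper; in particular the poles automatically lie on hyperplanes indexed by boundary flats.

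Fix such an $F$, say of rank $j$, and split $\A_P = \sum_{F_\bullet \ni F} a_{F_\bullet}^{-1} + \sum_{F_\bullet \not\ni F} a_{F_\bullet}^{-1}$. The second sum uses only denominators $a_G$ with $G \neq F$, hence is regular at a generic point of $\{a_F = 0\}$; and after extracting the lone factor $a_F^{-1}$ from the first sum, the cofactor $R_F := \sum_{F_\bullet \ni F} \prod_{i \neq j} a_{F_i}^{-1}$ is regular there for the same reason. This already shows the pole along $\{a_F = 0\}$ is at most simple, with residue $R_F|_{a_F = 0}$. To evaluate $R_F$ I would invoke the standard factorization of the Las Vergnas face lattice (implicit already in the contraction-of-topes language of \cref{thm:EL}; see \cite{EL}): the down-set $[\hat 0, F]$ in $L(P)$ is the face lattice $L(P|_F)$ of the restricted tope in $\M^F$, while the up-set $[F, \hat 1]$ is isomorphic via $G \mapsto G \setminus F$ to $L(P|_{E \setminus F})$ in the contraction $\M_F$. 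A maximal flag through $F$ therefore factors uniquely as a maximal flag of $\Fl(P|_F)$ followed by one of $\Fl(P|_{E \setminus F})$, and under the isomorphism the weight $a_{F_i}$ for $i > j$ becomes $a_H + a_F$ with $a_H$ the weight of the corresponding flat of $\M_F$. Hence $R_F(\a) = \A_{P|_F}(\a|_F) \cdot \big( \sum_{\phi \in \Fl(P|_{E\setminus F})} \prod_H (a_H + a_F)^{-1} \big)$, where the first factor is the amplitude formula for $P|_F$ evaluated on the weights $(a_e)_{e \in F}$ (which sum to $a_F$, not necessarily $0$). Specializing $a_F = 0$ makes those weights sum to zero, turning the first factor into the honest amplitude $\A_{P|_F}$ and the bracket into $\sum_\phi \prod_H a_H^{-1} = \A_{P|_{E\setminus F}}$; this is exactly $(a_F \A_P)|_{a_F = 0} = \A_{P|_F}\cdot\A_{P|_{E\setminus F}}$, valid for every proper $F \in L(P)$.

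It remains to see that no pole actually occurs when $F$ is disconnected, i.e.\ that $\A_{P|_F}$ vanishes on $\{a_F = 0\}$ in that case. I would isolate this as the key lemma: if an oriented matroid $\M'$ with ground set $E'$ is disconnected, then for every tope $P'$ the rational function $\A_{P'}$ is divisible by $a_{E'} := \sum_{e \in E'} a_e$, hence vanishes on $\{a_{E'} = 0\}$. Writing $\M' = \M'_1 \oplus \M'_2$ with ground sets $E'_1, E'_2$, the face lattice factors as $L(P') \cong L(P'_1) \times L(P'_2)$, so a maximal flag of $\Fl(P')$ is a shuffle of maximal flags of $\Fl(P'_1)$ and $\Fl(P'_2)$. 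Fixing the flags of the two factors and writing $x_i$ (resp.\ $y_j$) for the weight of the $i$-th flat of the first (resp.\ $j$-th of the second), so that $x_0 = y_0 = 0$ and $x_{r_1} = a_{E'_1}$, $y_{r_2} = a_{E'_2}$, the flat at step $k$ of a shuffle (after reaching the lattice point $(i(k),j(k))$) has weight $x_{i(k)} + y_{j(k)}$, and summing over shuffles gives
$$
\sum_{\mbox{\scriptsize monotone }(0,0)\to(r_1,r_2)}\;\; \prod_{\mbox{\scriptsize interior }(i,j)} \frac{1}{x_i + y_j} \;=\; \frac{x_{r_1} + y_{r_2}}{\prod_{i=1}^{r_1} x_i \;\prod_{j=1}^{r_2} y_j},
$$
a straightforward induction on $r_1 + r_2$ (peel off the first step, then partial fractions). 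Since $x_{r_1} + y_{r_2} = a_{E'_1} + a_{E'_2} = a_{E'}$, each such summand of $\A_{P'}$, and hence $\A_{P'}$ itself, is divisible by $a_{E'}$. Applied to $\M' = \M^F$ (where $a_{E'} = a_F$), this shows the residue $\A_{P|_F}\cdot\A_{P|_{E\setminus F}}$ vanishes when $F$ is disconnected; together with the previous paragraph this yields that the poles of $\A_P$ lie only along $\{a_F = 0\}$ for connected flats $F$, are simple, and obey the stated factorization recursion.

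I expect the main obstacle to be the second paragraph rather than the identities: one must pin down, with the correct signs coming from the chirotope $\chi$, that the intervals $[\hat 0, F]$ and $[F, \hat 1]$ of the Las Vergnas face lattice really are the face lattices of the restricted and contracted topes for an arbitrary (possibly non-realizable) oriented matroid, and that $\A_{P|_F}(\a|_F)$ is genuinely regular along $\{a_F = 0\}$ so that the ``residue $=$ cofactor at $a_F=0$'' step is legitimate. An alternative route bypasses \cref{thm:AP} entirely: lift everything to the Orlik--Solomon algebra, show that the combinatorial bilinear form $\ip{\cdot,\cdot}$ is compatible with the residue operators $\Res_\atom$ of \cref{thm:EL} (so that $\Res_\atom \Omega_P = \Omega_{P/\atom}$ propagates to a factorization of $\A_P = \ip{\Omega_P,\Omega_P}$ at $a_\atom = 0$, inducting up to all connected flats) and with direct sums of matroids, and deduce locality and unitarity from these structural facts; the vanishing at disconnected flats then reads off from the degeneracy of the Schechtman--Varchenko form of $\M'_1 \oplus \M'_2$ along the total weight $a_{E'}$.
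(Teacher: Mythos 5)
The survey quotes this theorem from \cite{LamMat} and does not reproduce its proof, so there is no in-text argument to compare against; I can only assess your proposal on its own terms. Your proof is correct. Reading the candidate simple poles off the flag formula \cref{thm:AP}, isolating $a_F^{-1}$ from the flags through $F$, and using the interval factorization of the Las Vergnas face lattice to express the residue as $\A_{P|_F}\cdot\A_{P|_{E\setminus F}}$ all go through, provided one regards $\A_P$ as a rational function in the unconstrained weights $(a_e)_{e\in E}$, so that distinct flats always give non-proportional linear forms $a_F$; this is worth stating explicitly, since on the slice $\sum_e a_e = 0$ the forms $a_F$ and $a_{E\setminus F}$ become identified up to sign and would otherwise muddle the residue bookkeeping. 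Your key shuffle lemma
$$
\sum_{\text{monotone }(0,0)\to(r_1,r_2)} \ \prod_{\text{interior }(i,j)} \frac{1}{x_i + y_j} \ = \ \frac{x_{r_1}+y_{r_2}}{\prod_{i=1}^{r_1} x_i\,\prod_{j=1}^{r_2} y_j}
$$
is correct (the induction that peels off the final step of the path verifies it), and together with the product decomposition $L(P')\cong L(P'_1)\times L(P'_2)$ for $\M'=\M'_1\oplus\M'_2$ it delivers the vanishing of the residue at disconnected boundary flats. The one ingredient you explicitly flag as needing to be pinned down, that the lower and upper intervals through $F$ in $L(P)$ are the tope face lattices of the restriction $\M^F$ and contraction $\M_F$, is a standard consequence of Las Vergnas' theory and is set up with the relevant sign conventions in \cite{EL}. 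The alternative route you sketch, through the residue maps $\Res_c$ of \cref{thm:EL} and compatibility of the Schechtman--Varchenko form with deletion-contraction, appears from the survey's own framing (note the table in \cref{sec:matroid} listing ``factorization $=$ deletion-contraction'') to be closer to the way \cite{LamMat} organizes the material; both routes are sound.
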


We summarize the analogy between QFT and matroids in the following table from \cite{LamMat}:
\begin{center}
\begin{tabular}{|c|c|}
\hline

worldsheet & matroid \\
\hline
kinematic space & dual of Lie algebra of intrinsic torus \\
\hline
\# of solutions to scattering equations & beta invariant \\
\hline
Parke-Taylor form & canonical form of a tope \\
\hline
biadjoint scalar partial amplitude & Laplace transform of Bergman fan \\
\hline
inverse string KLT matrix & discrete Laplace transform of Bergman fan\\
\hline
physical poles & connected flats \\
\hline
factorization & deletion-contraction \\
\hline
Feynman diagram & flag of flats \\
\hline
\end{tabular}
\end{center}

\section{Tropical geometry.}\label{sec:tropical}
Let $\Ccal = \bigcup_{k=1}^\infty \C((t^{1/k}))$ be the field of Puiseux series.  Let $U$ be a hyperplane arrangement complement as in \cref{sec:hyper}, and let $n = |E|$ be the number of hyperplanes.  Then $\P^d$ can be identified with a (projective) linear subspace $V$ of the projective space $\P^{n-1}$ so that $U$ is the intersection $V \cap (\C^*)^{n-1}$, where $(\C^*)^{n-1} \subset \P^{n-1}$ is obtained by removing the $n$ coordinate hyperplanes.  Viewing $V$ as the vanishing set of a set of linear equations with constant coefficients, we can define a linear space $V_{\Ccal} \subset (\Ccal^*)^{n-1}$ over the field of Puiseux series.  The tropical linear space $\Trop(V)$ of $V$ is the closure in $\R^{n-1}$ of the set of valuations of points in $V_{\Ccal}$.  It is the underlying set of a polyhedral fan $\Sigma_M$ called the \emph{Bergman fan}, and it only depends on the matroid $M$ of the hyperplane arrangement.  

The Bergman fan below is a slight variant that sits inside $\R^E$, the vector space with basis $\epsilon_e$, $e \in E$.  For $S \subset E$, let $\epsilon_S \in \R^E$ denote the vector $\epsilon_S:= \sum_{s \in S} \epsilon_s$.  For a flag of flats $F_\bullet = (\emptyset \subset F_1 \subset \cdots \subset F_{d+1} = E)$, define
$$
C_{F_\bullet}:= {\rm span}_{\R_{\geq 0}}(\epsilon_{F_1}, \epsilon_{F_2}, \ldots, \epsilon_{F_{d}}).
$$

\begin{definition}
The \emph{Bergman fan} $\Sigma_M$ is the pure $d$-dimensional fan in $\R^E$ whose maximal cones are the cones $C_{F_\bullet}$ for $F_\bullet \in \Fl(M)$.  
\end{definition}

The \emph{Laplace transform} is the integral transform defined as follows.  Given a function $f(\x)$ on $\R^n$, the Laplace transform $\L(f) = \L(f)(\y)$ is the function on $\R^n$
$$
\L(f)(\y) = \int_{\R_{>0}^n} f(\x) \exp(-\x \cdot \y) d^n \x,
$$
which is defined on the domain $\Gamma \subset \R^n$ of convergence of the integral.  The Laplace transform can be generalized to pointed cones $C$ in $\R^n$, by replacing $f(\x)$ with the characteristic function of $C$:
\begin{equation}\label{eq:Lapint}
\L(C)(\y) = \int_C \exp(-\x \cdot \y) d^n \x.
\end{equation}
The integral \eqref{eq:Lapint} converges absolutely for $\y \in \Int(C^*)$ in the interior of the dual cone $C^*$ to a rational function.  This rational function is defined to be the Laplace transform $\L(C)$ of $C$.  In the following, we use the Laplace transform for polyhedral subsets of the Bergman fan: decompose the polyhedral set into cones, and apply the $d$-dimensional Laplace transform.

The \emph{positive Bergman fan} \cite{AKW} $\Sigma_M(P)$ of the tope $P$ is the subfan of $\Sigma_M$ obtained by taking the union of all cones $C_{F_\bullet}$ for $F_\bullet \in \Fl(P)$, together with all the faces of these cones.

\begin{theorem}\label{thm:deRhamfan}
Let $P \in \T(\M)$ be a tope.  Then
\begin{equation}\label{eq:Bergman}
\A_P(\a)= \L(|\Sigma_M(P)|),
\end{equation}
where $|\Sigma_M(P)|$ denotes the underlying set of points of the fan $\Sigma_M(P)$.
\end{theorem}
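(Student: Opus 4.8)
The plan is to show that the Laplace transform on the right of \eqref{eq:Bergman} admits exactly the expansion furnished by \cref{thm:AP}, which (as recalled above) remains valid for topes of oriented matroids:
\[
\A_P(\a)=\sum_{F_\bullet\in\Fl(P)}\frac{1}{a_{F_\bullet}},\qquad a_{F_\bullet}=\prod_{i=1}^d a_{F_i},\qquad a_{F_i}=\sum_{e\in F_i}a_e=\epsilon_{F_i}\cdot\a .
\]
It therefore suffices to compute $\L(|\Sigma_M(P)|)$ cone by cone.

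First I would unwind the definition of the positive Bergman fan. By construction $\Sigma_M(P)$ is the subfan of the fine Bergman fan $\Sigma_M$ whose top-dimensional cones are exactly the simplicial cones $C_{F_\bullet}={\rm span}_{\R_{\geq 0}}(\epsilon_{F_1},\dots,\epsilon_{F_d})$ for $F_\bullet\in\Fl(P)$; each of these is $d$-dimensional, and since $\Sigma_M$ is an honest simplicial fan \cite{AKW}, distinct such cones meet only along lower-dimensional faces. Hence $|\Sigma_M(P)|=\bigcup_{F_\bullet\in\Fl(P)}C_{F_\bullet}$ is a union of $d$-dimensional cones overlapping in sets of measure zero, so the $d$-dimensional Laplace transform is additive over this decomposition: $\L(|\Sigma_M(P)|)=\sum_{F_\bullet\in\Fl(P)}\L(C_{F_\bullet})$. (The integrals involved converge simultaneously on a nonempty open region --- for instance whenever $\a$ has strictly positive coordinates, so that $\epsilon_F\cdot\a>0$ for every nonempty flat $F$ --- so the identity holds there as honest functions and hence everywhere as an identity of rational functions.)

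Next I would evaluate a single term. Setting $G_1=F_1$ and $G_i=F_i\setminus F_{i-1}$ for $i\geq2$, the sets $G_i$ are disjoint and $\epsilon_{F_i}=\sum_{j\leq i}\epsilon_{G_j}$, so $(\epsilon_{F_1},\dots,\epsilon_{F_d})$ differs from $(\epsilon_{G_1},\dots,\epsilon_{G_d})$ by a unitriangular change of basis; since the latter manifestly spans a saturated sublattice of $\Z^E$, it follows that $\{\epsilon_{F_1},\dots,\epsilon_{F_d}\}$ is a $\Z$-basis of ${\rm span}_\R(C_{F_\bullet})\cap\Z^E$, i.e.\ $C_{F_\bullet}$ is a \emph{unimodular} simplicial cone. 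The standard Laplace-transform computation for a unimodular cone (substitute $\x=\sum_i t_i\epsilon_{F_i}$; the Jacobian is $1$) then gives
\[
\L(C_{F_\bullet})(\a)=\prod_{i=1}^d\frac{1}{\epsilon_{F_i}\cdot\a}=\prod_{i=1}^d\frac{1}{a_{F_i}}=\frac{1}{a_{F_\bullet}},
\]
which restricts to a genuine rational function on the kinematic subspace $\{\sum_e a_e=0\}$ because each $\epsilon_{F_i}$ with $0<\rank(F_i)<r$ is a nonzero linear functional there. Summing over $F_\bullet\in\Fl(P)$ and comparing with \cref{thm:AP} yields $\L(|\Sigma_M(P)|)=\A_P(\a)$.

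The genuinely non-elementary inputs here are imported rather than proved: the structure theory of the Bergman fan used above --- that it is a simplicial fan and that $\Sigma_M(P)$ is the subfan supported on the maximal flags of the Las Vergnas face lattice $L(P)$ \cite{AKW} --- and \cref{thm:AP} itself. Granting these, the one place that needs care is the normalization convention for the $d$-dimensional Laplace transform of a cone sitting inside $\R^E$: one must use the lattice-normalized volume (equivalently, the convention under which the first-orthant cone has Laplace transform $\prod_i 1/y_i$), so that a unimodular cone contributes exactly $1/a_{F_\bullet}$ with no stray covolume factor. I expect this bookkeeping --- the measure convention together with the passage to the subspace $\sum_e a_e=0$ --- to be the only real subtlety. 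If one wanted an argument bypassing \cref{thm:AP}, one could instead prove \eqref{eq:Bergman} by induction on the rank of $\M$ using \cref{thm:APres}, matching the residue $(a_F\A_P)|_{a_F=0}$ with the Laplace transform of the star of the ray $\R_{\geq 0}\epsilon_F$ in $\Sigma_M(P)$, which factors over the restriction $\M^F$ and the contraction $\M_F$ exactly as Bergman fans do under deletion--contraction, with the rank-$1$ case immediate; that route is longer but makes transparent why locality and unitarity of matroid amplitudes are mirrored by the recursive geometry of the positive Bergman fan.
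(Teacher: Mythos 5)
Your proposal is correct and follows the natural route that the paper (a survey, which states \cref{thm:deRhamfan} without proof, deferring to \cite{LamMat}) has in mind: combine the flag formula of \cref{thm:AP} with the decomposition of $\Sigma_M(P)$ into the simplicial cones $C_{F_\bullet}$, $F_\bullet\in\Fl(P)$, and evaluate $\L(C_{F_\bullet})=1/a_{F_\bullet}$ using that $\{\epsilon_{F_1},\dots,\epsilon_{F_d}\}$ is a $\Z$-basis of $\Z^E\cap{\rm span}_\R(C_{F_\bullet})$ (the unitriangular change of basis to the disjoint $\epsilon_{G_i}$'s is exactly the right way to see this). Your attention to the lattice-normalization of the $d$-dimensional Laplace transform is the one genuine bookkeeping point, and you resolve it correctly.
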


In the case that $M$ is the graphic matroid of the complete graph, the positive Bergman fan is dual to the face poset of the associahedron.  We have thus come full circle: for the complete graphic matroid, the Laplace transform in \eqref{eq:Bergman} recovers the dual volume polynomial of the associahedron and we obtain \eqref{eq:planarphi3}.  We expect that \cref{thm:deRhamfan} has a wide generalization to tropicalizations of other varieties.

Let us finally mention that our constructions in \cref{sec:polytope}--\cref{sec:tropical} have a variant where the volume function is replaced by taking generating functions of lattice points.  In this alternative story, the de Rham twisted cohomology intersection form in \eqref{eq:dR} is replaced by a Betti twisted homology intersection form; instead of obtaining scattering amplitudes we obtain the \emph{inverse KLT matrix}, an important ingredient in the study of string theory amplitudes.  The Laplace transform in \cref{thm:deRhamfan} is replaced by a discrete Laplace transform.  See \cite{LamMat} for details and references.  In upcoming work \cite{GLX2} joint with Gao and Xue, we consider the lattice point generating function analogue of the dual volume polynomial in \cref{thm:dual}, a discretized version of the canonical form.

\begin{remark}
Tropical geometry also appears in the study of $N=4$ SYM planar amplitudes that we discussed in the earlier half of this survey; see \cite{ALSnon,DFGK,HP}.
\end{remark}

\section{Conclusion.}
The developments surveyed here chart a path from the frontiers of quantum field theory to the frontiers of combinatorial geometry. Scattering amplitudes appear as canonical objects arising from positive geometry. Polytopes, Grassmannians, moduli spaces, hyperplane arrangements, matroids, and tropical varieties each provide a setting where amplitudes manifest as natural geometrically defined functions, unified by the mantra of positivity.

\section*{Acknowledgments.}
I am grateful to my collaborators for the many ideas they have shared and from whom I've learnt so much.  I thank the National Science Foundation for support under grant DMS-2348799.

\bibliographystyle{siamplain}
\bibliography{ICM_references}
\end{document}